\newtheorem{thm}{Theorem}[section]
\newtheorem{cor}[thm]{Corollary}
\newtheorem{lem}[thm]{Lemma}
\newtheorem{prop-def}[thm]{Proposition-Definition}
\theoremstyle{definition}
\newtheorem{Def}[thm]{Definition}
\theoremstyle{remark}
\numberwithin{equation}{section}
\numberwithin{figure}{section}
\newcommand{\A}{\mathcal{A}}
\def\la{\lambda}
\def\A{\mathcal{A}}
\def\diag{{\rm diag}}
\def\x{{\mathbf x}}
\def\y{{\mathbf y}}
\def\C{\mathbb{C}}
\def\V{\mathcal{V}}
\def\I{\mathcal{I}}
\def\B{\mathcal{B}}
\def\diag{{\rm diag}}
\def \i{\mathbf{i}}
\def \A{\mathcal{A}}
\def \Dg{\mathfrak{D}}
\def \D{\mathcal{D}}
\def \Spec{\mbox{\rm Spec}}
\def \PV{\mathbb{V}}
\def\sp{\mbox{\rm supp}}
\begin{document}
\title[Dimension of Nonnegative Tensors]
{The Dimension of Eigenvariety of Nonnegative Tensors Associated with Spectral Radius}

\author[Y.-Z. Fan]{Yi-Zheng Fan$^*$}
\address{School of Mathematical Sciences, Anhui University, Hefei 230601, P. R. China}
\email{fanyz@ahu.edu.cn}
\thanks{$^*$The corresponding author.
The first and the second authors were supported by National Natural Science Foundation of China \#11371028.
The third author was supported by National Natural Science Foundation of China \#11401001.}

\author[T. Huang]{Tao Huang}
\address{School of Mathematical Sciences, Anhui University, Hefei 230601, P. R. China}
\email{huangtao@ahu.edu.cn}

\author[Y.-H. Bao]{Yan-Hong Bao}
\address{School of Mathematical Sciences, Anhui University, Hefei 230601, P. R. China}
\email{baoyh@ahu.edu.cn}
\date{\today}

\subjclass[2000]{Primary 15A18, 05C65; Secondary 13P15, 14M99}



\keywords{Tensor, spectral radius, eigenvector, variety, dimension}

\begin{abstract}
For a nonnegative weakly irreducible tensor,
  its spectral radius is an eigenvalue corresponding to a unique positive eigenvector up to a scalar called the Perron vector.
But including the Perron vector, it may have more than one eigenvector corresponding to the spectral radius.
The projective eigenvariety associated with the spectral radius is the set of the eigenvectors corresponding to the spectral radius considered in the complex projective space.
In this paper we prove that the dimension of the above projective eigenvariety is zero, i.e. there are finite many eigenvectors associated with the spectral radius up to a scalar.
For a general nonnegative tensor, we characterize the nonnegative combinatorially symmetric tensor for which the
 dimension of projective eigenvariety associated with spectral radius is greater than zero.
 Finally we apply those results to the adjacency tensors of uniform hypergraphs.
   \end{abstract}

\maketitle


\section{Introduction}

A \emph{tensor} (also called \emph{hypermatrix}) $\A=(a_{i_{1} i_2 \ldots i_{m}})$ of order $m$ and dimension $n$ over a field $k$ refers to a
 multi-array of entries $a_{i_{1}i_2\ldots i_{m}}\in k$
 for all $i_{j}\in [n]:=\{1,2,\ldots,n\}$ and $j\in [m]$.
 In this paper, we mainly consider complex tensors, i.e. $k=\C$.
For a given a vector $\x=(x_1, \cdots, x_n)\in \C^n$, $\A\x^{m-1}\in \C^n$, and is defined as
\begin{align*}
(\A\x^{m-1})_i=\sum_{i_2,\cdots, i_m\in [n]}a_{ii_2\cdots i_m}x_{i_2}\cdots x_{i_m}, i\in [n].
\end{align*}
Let $\I=(i_{i_1i_2\cdots i_m})$ be the \emph{identity tensor} of order $m$ and dimension $n$, that is,
$i_{i_1i_2\cdots i_m}=1$ for $i_1=i_2=\cdots=i_m$ and $i_{i_1i_2\cdots i_m}=0$ otherwise.
Recall an \emph{eigenvalue} $\la\in \C$ of $\A$ means that the polynomial system $(\la \I-\A)\x^{m-1}=0$,
  or equivalently the \emph{eigenequation} $\A\x^{m-1}=\la \x^{[m-1]}$, has a nontrivial solution $\x \in \C^n$ which is called an \emph{eigenvector} of $\A$ associated with $\la$, where $\x^{[m-1]}:=(x_1^{m-1}, \cdots, x_n^{m-1})$; see \cite{Lim,Qi1}.

The \emph{determinant} of $\A$, denoted by $\det \A$, is defined as the resultant of the polynomials $\A \x^{m-1}$,
and the \emph{characteristic polynomial} $\varphi_\A(\la)$ of $\A$ is defined as $\det(\la \I-\A)$ \cite{CPZ2,Qi1}.
It is known that $\la$ is an eigenvalue of $\A$ if and only if it is a root of $\varphi_\A(\la)$.
The \emph{algebraic multiplicity} of $\la$ as an eigenvalue of $\A$  is defined as the multiplicity of $\la$ as a root of $\varphi_\A(\la)$.
The \emph{spectrum} of $\A$, denoted by $\Spec(\A)$, is the multi-set of the roots of $\varphi_\A(\la)$.
The largest modulus of the elements in $\Spec(\A)$ is called the \emph{spectral radius} of $\A$, denoted by $\rho(\A)$.

Let $\la$ be an eigenvalue of $\A$ and $\V_\la=\V_\la(\A)$ the set of all eigenvectors of $\A$ associated with $\la$ together with zero, i.e.
$$\V_\la(\A)=\{\x \in \mathbb{C}^n: \A\x^{m-1}=\la \x^{[m-1]}\}.$$
Observe that the system of equations $(\la \I-\A)\x^{m-1}=0$ is not linear yet for $m\ge 3$,
and therefore $\V_\la$ is not a linear subspace of $\mathbb{C}^n$ in general.
In fact, $\V_\la$ forms an affine variety in $\C^n$ \cite{Ha}, which is called the \emph{eigenvariety}
of $\A$ associated with $\la$ \cite{HuYe}.
Let $\mathbb{P}^{n-1}$ be the standard complex projective spaces of dimension $n-1$.
Since each polynomial in the system $(\la \I-\A)\x^{m-1}=0$ is homogenous of degree $m-1$,
we consider the projective variety
$$\PV_\la=\PV_\la(\A)=\{\x \in \mathbb{P}^{n-1}: \A\x^{m-1}=\la \x^{[m-1]}\}.$$
which is called the \emph{projective eigenvariety} of $\A$ associated with $\la$.
%

By Perron-Frobenius theorem, it is known that if $\A$ is a nonnegative irreducible matrix,
then $\rho(\A)$ is a simple eigenvalue of $\A$ associated with a positive eigenvector called {\it Perron vector} up to a scalar.
So $\PV_{\rho(\A)}$ contains only one element.
But, in general for $\A$ being a nonnegative irreducible or weakly irreducible tensor,
including the Perron vector, $\A$ may have more than one eigenvector associated with the eigenvalue $\rho(\A)$ up to a scalar,
i.e. $\PV_{\rho(\A)}$ may contains more than one element \cite{FHB}.
So, \emph{it is a natural problem to characterize the dimension of $\PV_{\rho(\A)}$}.

Hu and Ye \cite{HuYe} define the geometric multiplicity of an eigenvalue $\la$ of $\A$ to be the dimension of $\V_\la(\A)$,
 and try to establish a relationship between the algebraic multiplicity and geometric multiplicity of an eigenvalue of $\A$.
For a nonnegative irreducible tensor $\A$, Li et.al \cite{LiYY} give a new definition of geometric multiplicity,
and their result implies that the dimension of $\PV_{\rho(\A)}$ is zero, i.e. there are a finite number of eigenvectors of $\A$  corresponding to $\rho(\A)$ up to a scalar.
Recently we show that for a nonnegative combinatorial symmetric weakly irreducible tensor $\A$,
the dimension of $\PV_{\rho(\A)}$ is zero \cite{FBH}.
Furthermore, the cardinality of $\PV_{\rho(\A)}$ can be get exactly by solving the Smith normal form of the incidence matrix of $\A$.
So, the next work is to consider the nonnegative weakly irreducible tensors or general nonnegative tensors.

%

In this paper by basic knowledge of algebraic geometry,
we proved that $\PV_{\rho(\A)}$ has dimension zero if $\A$ is nonnegative weakly irreducible.
We also give some upper bounds for the cardinality of $\PV_{\rho(\A)}$ for some special classes of nonnegative tensors $\A$,
and characterize the nonnegative combinatorially symmetric tensors $\A$ for which the
 dimension of $\PV_{\rho(\A)}$ is greater than zero.
Finally we apply those results to the adjacency tensors of uniform hypergraphs.

\section{Quasiprojective variety}
The basic knowledge of algebraic geometry in this section is adopted from the monographs \cite{Ha} and \cite{Shaf}.
\subsection{Affine space}
Let $k$ be an algebraically closed field.
Let $\mathbb{A}_k^n$ or simply $\mathbb{A}^n$ be an affine space of dimension $n$, the set of all $n$-tuples of elements of $k$.
Let $k[x]:=k[x_1,\ldots,x_n]$ be the polynomial ring in $n$ variables over $k$.
If $T$ is any subset of $k[x]$, define
$$V(T)=\{P \in \mathbb{A}^n: f(P)=0 ~\mbox{for all}~ f \in T\}.$$
A subset $X$ of $\mathbb{A}^n$ is an a \emph{closed subset} if there exists a subset $T \subseteq k[x]$ such that
$X=V(T)$.
The \emph{Zariski topology} on  $\mathbb{A}^n$ is the topology whose closed sets are exactly the closed sets in $\mathbb{A}^n$.
Let $X$ be a subset of $\mathbb{A}^n$.
The \emph{ideal of $X$} is defined to be
$$I(X)=\{f \in k[x]: f(P)=0 ~\mbox{for all}~ P \in X\}.$$
It is known that
\begin{enumerate}
\item $V(I(X))=\overline{X}$, where $\overline{X}$ is the closure of $X$ in the Zariski topology on $\mathbb{A}^n$;

\item (Hilbert Nullstellensatz) $I(V(\mathfrak{a}))=\sqrt{\mathfrak{a}}$ if $\mathfrak{a}$ is an ideal of $k[x]$.
\end{enumerate}

Let $X$ be a closed subset of $\mathbb{A}^n$.
A function $f: X \to k$ is called \emph{regular} if there
exists a polynomial $F \in k[x]$ such that $F(x)=f(x)$ for all $x \in X$.
The set of regular functions on $X$ is exactly the ring $k[X]:=k[x]/I(X)$, called the \emph{affine coordinate ring} of $X$.
A map $f: X \to Y$ between closed subsets of $\mathbb{A}^n$ and $\mathbb{A}^m$ respectively is \emph{regular} if there exist $m$ regular functions
$f_1,\ldots, f_m$ such that $f(x)=(f_1(x),\ldots,f_m(x))$ for all $x \in X$.


%
%

\subsection{Projective space}
The projective space $\mathbb{P}_k^n$ or simply $\mathbb{P}^n$ is the set of equivalent classes
$(\mathbb{A}_k^{n+1}\backslash \{(0,\ldots,0)\}) / \sim$, where $(a_0,\ldots,a_n) \sim (b_0,\ldots,b_n)$ if and only if there exists a nonzero $\la \in k$ such that
$b_i =\la a_i$ for  $i=0,\ldots,n$.
The class of $a=(a_0,\ldots,a_n)$ from $\mathbb{A}_k^{n+1}\backslash \{(0,\ldots,0)\}$ in $\mathbb{P}^n$ is denoted by
$[a_0:\cdots:a_n]$, called the \emph{homogeneous coordinates} of $a$.

Let $k[x]:=k[x_0,\ldots,x_n]$.
If $T$ is a subset of homogeneous polynomials in $k[x]$, define
$$V(T)=\{P \in \mathbb{P}^n: f(P)=0 \hbox{~for all~} f \in T\}.$$
A subset $X$ of $\mathbb{P}^n$ is a \emph{closed subset} if there exists a set $T$ of homogeneous polynomials of $k[x]$ such that
$X=V(T)$.
The \emph{Zariski topology} on  $\mathbb{P}^n$ is the topology whose closed sets are exactly the closed sets in $\mathbb{P}^n$.
An ideal $\mathfrak{a}$ of $k[x]$ is called a \emph{homogeneous ideal} if it is generated by a set of homogeneous polynomials.
For a subset $X$ of $\mathbb{P}^n$, the ideal of $X$ is defined as
$$I(X)=\{f \in k[x]: f(P)=0 ~\mbox{for all}~ P \in X\}.$$
It is known that $I(X)$ is a homogeneous ideal.
We still have analogous results to (1) and (2) in Section 2.1.

For each $i=0,\ldots,n$, set $\mathbb{A}_i^n:=\mathbb{P}^n \backslash V(x_i)$, called an \emph{affine piece} of $\mathbb{P}^n$.
Then $\mathbb{A}_i^n$ is obviously open, and the map (using $\mathbb{A}_0^n$ for example)
\begin{equation} \label{A0A} \phi_0: \mathbb{A}_0^n \to \mathbb{A}^n, [x_0:\cdots:x_n] \mapsto (x_1/x_0,\ldots,x_n/x_1),\end{equation}
is a homomorphism for the Zariski topology.
If $X$ is a closed set of $\mathbb{P}^n$ given by a system of homogeneous equations $F_1=\cdots=F_m=0$ and $\hbox{deg} F_j =n_j$, then
$\phi_0(X)$ is given by the system
$$ S_0^{-n_j} F_j=F_j(1,T_1,\ldots,T_n)=0, \hbox{~for~} j=1,\ldots,m,$$
where $T_i=S_i/S_0$ for $i=1,\ldots,n$.
If $U$ is a closed set of $\mathbb{A}^n$, then $U$ defines a closed projective set $\overline{U}$ called the \emph{projective completion of $U$}.
If $F(T_1,\ldots,T_n)$ is a polynomial in the ideal $I(U)$ with $\hbox{deg}F=k$, then the equations of $\overline{U}$ are of the form $S_0^kF(S_1/S_0,\ldots,S_n/S_0)$.
So
$ \phi_0^{-1}(U)=\overline{U} \cap \mathbb{A}_0^n$, a closed set in $\mathbb{A}_0^n$.
One can similarly define the map $\phi_i$ from $\mathbb{A}_i^n$ to $\mathbb{A}^n$ by replacing $x_0$ in (\ref{A0A}) by $x_i$ for $i=1,\ldots,n$.

\begin{Def}\label{quasi}
A \emph{quasiprojective variety} is an open subset of a closed projective set.
\end{Def}

By the above definition, the class of quasiprojective varieties includes all projective closed sets and all affine closed sets.
A closed subset of a quasiprojective variety is its intersection with a close set of projective space.
Open set and neighbourhood of a point are defined similarly.

Let $X \subseteq \mathbb{P}^n$, $x \in X$ and $f=P/Q$ be a homogeneous function of degree $0$ with $Q(x) \ne 0$, then
$f$ defines a function on a neighbourhood of $x$ in $X$ with values in $k$.
We say $f$ is \emph{regular} at $x$.
A function on $X$ that is regular at all points of $X$ is called a \emph{regular} function.
The set of regular functions on $X$ forms a ring, denoted by $k[X]$.
If $X$ is an affine closed set, then the definition of regular function here is the same as in Section 2.1.

\begin{Def}
Let $f: X \to Y$ be a map between quasiprojective varieties $X \subseteq \mathbb{P}^n$ and $Y  \subseteq \mathbb{P}^m$.
The map $f$ is \emph{regular} if for every $x \in X$, there exists some affine piece $\mathbb{A}_i^m$ containing $f(x)$ and a neighbourhood $U$ of $x$ such that
$f(U) \subseteq \mathbb{A}_i^m$ and the map $f|_U: U \to  \mathbb{A}_i^m$ is regular, i.e. given by $m$ regular functions from $k[U]$.
\end{Def}

A regular map between two quasiprojective varieties is called an \emph{isomorphism} if it has an inverse regular map.
It is known that the closed subsets map to closed subsets under isomorphism; see \cite[Section 4.2]{Shaf}.
An \emph{affine variety} (respectively, \emph{projective variety}) is a quasiprojective variety isomorphic to a closed subset of an affine space
(respectively, a closed subset of a projective space).

\subsection{Noetherian topological space}
A topological space $X$ is called \emph{noetherian} if it satisfies the descending chain condition for closed subsets:
for any sequence $Y_1 \supseteq Y_2 \supseteq \cdots$ of closed subsets, there is an integer $r$ such that $Y_r=Y_{r+1}=\cdots$.

A nonempty subset $Y$ of a topological space $X$ is called \emph{irreducible} if it cannot be expressed as the union $Y=Y_1 \cup Y_2$ of two
proper subsets, each one of which is closed in $Y$.

\begin{lem}\label{decom} \cite[Proposition 1.5]{Ha}
In a noetherian topological space $X$, every nonempty closed subset $Y$ can be expressed as a finite union $Y=Y_1 \cup \cdots Y_r$ of irreducible closed subsets $Y_i$.
If we require that $Y_i \nsupseteq Y_j$ if $i \ne j$, then the $Y_i$'s are unique determined. They are called the irreducible components of $Y$.
\end{lem}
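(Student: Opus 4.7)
The plan is to prove existence and uniqueness separately, using the descending chain condition for the former and irreducibility for the latter. This is the standard noetherian induction argument.

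For the existence part, I would argue by contradiction using the descending chain condition. Let $\mathcal{F}$ denote the collection of nonempty closed subsets of $X$ that \emph{cannot} be written as a finite union of irreducible closed subsets, and suppose $\mathcal{F} \neq \emptyset$. I want to extract a minimal element of $\mathcal{F}$: starting from any $Y \in \mathcal{F}$, if it is not minimal, there is a strictly smaller $Y' \in \mathcal{F}$, and iterating produces a descending chain that must stabilize by the noetherian hypothesis, yielding a minimal $Y_0 \in \mathcal{F}$. Now $Y_0$ cannot be irreducible (otherwise $Y_0 = Y_0$ is itself such a decomposition), so $Y_0 = Z_1 \cup Z_2$ with $Z_1, Z_2$ proper closed subsets of $Y_0$. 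By minimality of $Y_0$, neither $Z_1$ nor $Z_2$ lies in $\mathcal{F}$, so each admits a finite decomposition into irreducible closed subsets; concatenating these gives one for $Y_0$, contradicting $Y_0 \in \mathcal{F}$. Hence $\mathcal{F} = \emptyset$. Finally, given any decomposition $Y = Y_1 \cup \cdots \cup Y_r$, discard any $Y_i$ with $Y_i \subseteq Y_j$ for some $j \neq i$ until no redundancy remains; this preserves the union and yields the desired form.

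For uniqueness, suppose $Y = Y_1 \cup \cdots \cup Y_r = Y_1' \cup \cdots \cup Y_s'$ are two irredundant decompositions. Fix an index $i$. Since $Y_i \subseteq \bigcup_{j=1}^{s} Y_j'$, we have $Y_i = \bigcup_{j=1}^{s} (Y_i \cap Y_j')$, a finite union of closed subsets of $Y_i$. Irreducibility of $Y_i$ forces $Y_i = Y_i \cap Y_{j(i)}'$ for some $j(i)$, i.e.\ $Y_i \subseteq Y_{j(i)}'$. By symmetry, $Y_{j(i)}' \subseteq Y_k$ for some $k$, and then $Y_i \subseteq Y_k$, so by irredundancy $k = i$ and all three sets are equal. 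This shows $i \mapsto j(i)$ is a well-defined map from $\{1,\ldots,r\}$ to $\{1,\ldots,s\}$ with $Y_i = Y_{j(i)}'$; the symmetric argument produces an inverse, so the two collections $\{Y_i\}$ and $\{Y_j'\}$ coincide.

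The main (very mild) obstacle is simply to justify the extraction of a minimal element in $\mathcal{F}$ from the descending chain condition; everything else is a short direct manipulation. There are no delicate hypotheses on $X$ beyond the noetherian property, and no auxiliary machinery from the preceding subsections is needed.
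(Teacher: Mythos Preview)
The paper does not supply its own proof of this lemma: it is simply quoted from Hartshorne \cite[Proposition 1.5]{Ha} and used as a black box. Your argument is correct and is in fact the standard noetherian-induction proof (exactly the one Hartshorne gives), so there is nothing to compare.
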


The \emph{dimension} of a topological space $X$, denoted by $\dim X$, is defined to be the supremum of all integers $n$ such that there
exists a chain $Z_0 \subset Z_1 \subset \cdots \subset Z_n$ of distinct irreducible closed subsets of $X$.

As quasiprojective varieties (including projective closed sets and affine closed sets) are noetherian topological space with Zariski topology,
we have the decomposing result and the notion of dimension for a quasiprojective variety.

\subsection{Image of projective variety}
\begin{thm}\label{close} \cite[Theorem 1.10]{Shaf}
The image of a projective variety under a regular map is closed.
\end{thm}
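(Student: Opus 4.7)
The plan is to reduce the assertion to the fundamental fact that the projection $\pi_Y\colon \mathbb{P}^n \times Y \to Y$ is a closed map for every quasiprojective variety $Y$, and then deduce that every regular image of a projective variety is closed by means of the graph construction.

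First I would suppose $f\colon X \to Y$ is a regular map with $X$ a projective variety, say $X \subseteq \mathbb{P}^n$ a closed subset. I would form the graph $\Gamma_f = \{(x, f(x)) : x \in X\} \subseteq X \times Y$. Using the separatedness of $Y$ (the fact that the diagonal $\Delta_Y \subseteq Y \times Y$ is closed, which holds since $Y$ is quasiprojective and hence embeds in some $\mathbb{P}^m$), one sees that $\Gamma_f$ is the preimage of $\Delta_Y$ under the regular map $(f,\mathrm{id})\colon X \times Y \to Y \times Y$, and is therefore closed in $X \times Y$. Since $X$ is closed in $\mathbb{P}^n$, the set $\Gamma_f$ is also closed in $\mathbb{P}^n \times Y$, and $f(X) = \pi_Y(\Gamma_f)$ is the image of this closed set under the projection $\pi_Y\colon \mathbb{P}^n \times Y \to Y$. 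So once I know $\pi_Y$ is a closed map, I am done.

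Second, I would establish the closedness of $\pi_Y$. Since closedness of a map can be checked locally on the target, I would cover $Y$ by affine open pieces (as in the definition of quasiprojective variety) and reduce to the case $Y = \mathbb{A}^m$. In that case the statement is the Main Theorem of Elimination Theory: given a family of polynomials $F_1(x_0,\ldots,x_n;y_1,\ldots,y_m),\ldots,F_s(x_0,\ldots,x_n;y_1,\ldots,y_m)$, homogeneous in the $x_i$ with coefficients polynomial in the $y_j$, the set of $y \in \mathbb{A}^m$ for which $F_1=\cdots=F_s=0$ has a nontrivial common zero in $\mathbb{P}^n$ is closed in $\mathbb{A}^m$. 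I would prove this by showing that its complement is open: choose a degree $d$ large enough that absence of a common zero at a point $y_0$ is equivalent, by a graded Nullstellensatz argument, to the vanishing of the degree-$d$ piece of $k[x_0,\ldots,x_n]/(F_1(\cdot\,;y_0),\ldots,F_s(\cdot\,;y_0))$, i.e., to the surjectivity of a linear map whose matrix entries are polynomials in $y$. Non-vanishing of some maximal minor of this matrix is then an open condition on $y$, so the complement of the projection is open.

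The main obstacle is the second step. The graph reduction is essentially formal once separatedness is in hand. The real content lies in the elimination theorem: justifying that one can choose a single degree $d$ that works uniformly as $y$ varies, and then packaging the condition ``nontrivial common zero exists'' as the common vanishing of polynomial minors in $y$, is the standard but nontrivial part of the argument. This can be done either via resultants in the two-polynomial case and induction, or via the graded-module approach sketched above.
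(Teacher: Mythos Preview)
The paper does not supply its own proof of this theorem; it is stated with a citation to \cite[Theorem 1.10]{Shaf} and used as a black box. Your proposal is correct and is in fact precisely the argument Shafarevich gives: reduce via the graph $\Gamma_f$ to the closedness of the projection $\mathbb{P}^n \times Y \to Y$, check this locally on affine pieces of $Y$, and invoke the Main Theorem of Elimination Theory for $\mathbb{P}^n \times \mathbb{A}^m \to \mathbb{A}^m$. So your sketch matches the cited source, and there is nothing further to compare against in the paper itself.
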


\begin{cor}\label{one}\cite[Corollary 1.2]{Shaf}
A regular map $f: X \to Y$ from an irreducible projective variety $X$ to an affine variety $Y$ maps $X$ to a point.
\end{cor}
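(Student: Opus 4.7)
\noindent\emph{Proof proposal.} The plan is to combine Theorem \ref{close} with a reduction to the one-dimensional target $\mathbb{A}^1$. Since an affine variety is by definition isomorphic to a closed subset of some affine space $\mathbb{A}^m$, I may compose $f$ with this isomorphism and the inclusion to assume $f: X \to \mathbb{A}^m$. The map is then specified by its $m$ coordinate functions $f_i = \pi_i \circ f : X \to \mathbb{A}^1$, so it suffices to show each $f_i$ is constant on $X$. This reduces the corollary to the case $Y = \mathbb{A}^1$.

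For such an $f_i$, view $\mathbb{A}^1$ as the affine piece $\mathbb{A}_0^1 \subseteq \mathbb{P}^1$ and let $g$ be the composition $X \xrightarrow{f_i} \mathbb{A}^1 \hookrightarrow \mathbb{P}^1$, which is regular. By Theorem \ref{close} the image $g(X)$ is closed in $\mathbb{P}^1$. Because $g(X) \subseteq \mathbb{A}_0^1 = \mathbb{P}^1 \setminus V(x_0)$, this image is a \emph{proper} closed subset of $\mathbb{P}^1$. Any proper closed subset of $\mathbb{P}^1$ is finite: a nonzero homogeneous polynomial in two variables factors into linear factors, so the zero locus in $\mathbb{P}^1$ of any nonzero homogeneous ideal of $k[x_0,x_1]$ consists of finitely many points. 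Since regular maps are continuous in the Zariski topology and continuous images of irreducible sets are irreducible, $g(X)$ is an irreducible finite subset of $\mathbb{P}^1$. But a finite set of two or more points decomposes in the Zariski topology as a disjoint union of two nonempty closed pieces, hence is reducible. Therefore $|g(X)| = 1$, so $f_i$ is constant on $X$.

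Running this argument for each $i = 1, \ldots, m$ shows that every coordinate of $f$ is constant on $X$, so $f(X)$ is a single point of $\mathbb{A}^m$ and hence of $Y$. The essential input is Theorem \ref{close}; the remaining steps are the bookkeeping reduction to $\mathbb{P}^1$ and the elementary description of closed subsets of $\mathbb{P}^1$, and I do not anticipate any serious obstacle beyond ensuring that the target is genuinely embedded into a projective space via an affine piece so that Theorem \ref{close} is applicable.
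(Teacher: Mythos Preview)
Your argument is correct and is essentially the classical proof found in Shafarevich: reduce to coordinate functions $X\to\mathbb{A}^1$, embed $\mathbb{A}^1$ as an affine piece of $\mathbb{P}^1$, apply Theorem~\ref{close} to conclude the image is a proper closed subset of $\mathbb{P}^1$, hence finite, hence a single point by irreducibility. Note that the paper itself does not supply a proof of Corollary~\ref{one}; it simply cites \cite[Corollary 1.2]{Shaf}, so your write-up is in fact more detailed than what the paper provides, and matches the source argument.
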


\begin{cor}\label{finite}
Let $f: X \to Y$ be a regular map from a projective variety $X$ to an affine variety $Y$.
Then $f(X)$ is finite.
\end{cor}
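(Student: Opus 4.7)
The plan is to reduce the claim to Corollary \ref{one} by decomposing $X$ into irreducible components. Since $X$ is a projective variety, it is a quasiprojective variety, and as noted at the end of Section 2.3 such varieties are noetherian topological spaces with respect to the Zariski topology. Thus Lemma \ref{decom} applies: we can write $X = X_1 \cup \cdots \cup X_r$ as a finite union of irreducible closed subsets.

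Next I would observe that each $X_i$ is a closed subset of the projective variety $X$, hence itself a projective variety (a closed subset of a closed set of some $\mathbb{P}^n$ is again a closed set of $\mathbb{P}^n$). The restriction $f|_{X_i}: X_i \to Y$ remains a regular map, since regularity is a local condition preserved under restriction to a subset. Now $X_i$ is an \emph{irreducible} projective variety and $Y$ is an affine variety, so Corollary \ref{one} applies to $f|_{X_i}$ and tells us that $f(X_i)$ consists of a single point.

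Finally, assembling the pieces gives
\[
f(X) = f(X_1 \cup \cdots \cup X_r) = f(X_1) \cup \cdots \cup f(X_r),
\]
which is a union of $r$ single points, hence a finite set of cardinality at most $r$. This completes the argument.

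The proof is essentially a formal consequence of the two cited results, so there is no real obstacle. The only point that deserves a moment's care is the verification that each irreducible component $X_i$ genuinely qualifies as an ``irreducible projective variety'' in the sense required by Corollary \ref{one} (that is, that being a closed subset of a projective variety yields a projective variety in its own right), but this is immediate from the definition of projective variety given in Section 2.2.
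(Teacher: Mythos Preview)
Your proof is correct and follows essentially the same approach as the paper: decompose $X$ into irreducible components via Lemma \ref{decom}, verify each component is itself an irreducible projective variety, apply Corollary \ref{one} to each, and take the finite union. The paper is slightly more explicit about the one point you flag at the end---it passes through the isomorphism $\phi: X \to \mathbb{P}^m$ guaranteed by the definition of projective variety and notes that $\phi(X_i)$ is closed in $\mathbb{P}^m$ and isomorphic to $X_i$---but this is exactly the justification you sketch in your final paragraph.
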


\begin{proof}
By Lemma \ref{decom}, we can write $X=X_1 \cup \cdots X_r$ of irreducible closed subsets $X_i$.
As $X$ is a projective variety, there is an isomorphism $\phi: X \to Y \subset \mathbb{P}^m$ for some $m$, where $Y$ is a closed set of $\mathbb{P}^m$;
and $\phi(X_i)$ is also a closed subset of $\mathbb{P}^m$, which is isomorphic to $X_i$.
So, each $X_i$ is an irreducible projective variety, and $f(X_i)$ is a point by Corollary \ref{one}, which implies that
$f(X)=\cup_{i=1}^r f(X_i)$ is finite.
\end{proof}

\section{Main result}

Let $\A=(a_{i_{1}i_2\ldots i_{m}})$ be a tensor of order $m$ and dimension $n$.
If all entries $a_{i_1i_2\cdots i_m}$ of $\A$ are invariant under any permutation of its indices, then $\A$ is called a \emph{symmetric tensor}.
The {\it support} of $\A$, also called the {\it zero-nonzero pattern} of $\A$ in \cite{Shao},
denoted by $\sp(\A)=(s_{i_1i_2\ldots i_m})$, is defined as a tensor with same order and dimension as $\A$, such that $s_{i_1\ldots i_m}=1$ if $a_{i_1\ldots i_m} \ne 0$,
and $s_{i_1\ldots i_m}=0$ otherwise.
The tensor $\A$ is called {\it combinatorial symmetric} if its support $\sp(\A)$ is symmetric.
The tensor $\A$ is called \emph{reducible} if there exists a nonempty proper index subset $I \subset [n]$ such that
$a_{i_{1}i_2\ldots i_{m}}=0$ for any $i_1 \in I$ and any $i_2,\ldots,i_m \notin I$;
if $\A$ is not reducible, then it is called \emph{irreducible} \cite{CPZ}.
We also can associate $\A$ with a directed graph $G(\A)$ on vertex set $[n]$ such that $(i,j)$ is an arc of $G(\A)$ if
and only if there exists a nonzero entry $ a_{ii_2\ldots i_{m}}$ such that $j \in \{ i_2,\ldots, i_{m}\}$.
Then $\A$ is called \emph{weakly irreducible} if $G(\A)$ is strongly connected; otherwise it is called \emph{weakly reducible} \cite{FGH}.
It is known that if $\A$ is irreducible, then it is weakly irreducible; but the converse is not true.

The following is the Perron-Frobenius theorem for nonnegative tensors,
where an eigenvalue is called \emph{$H^+$-eigenvalue}
(respectively, \emph{$H^{++}$-eigenvalue}) if it is associated with a nonnegative (respectively, positive) eigenvector.

\begin{thm}[Perron-Frobenius theorem for nonnegative tensors]\label{PF1}~~
\begin{enumerate}
\item{\em(Yang and Yang \cite{YY1})}  If $\A$ is a nonnegative tensor of order $m$ and dimension $n$, then $\rho(\A)$ is an $H^+$-eigenvalue of $\A$.

\item{\em(Friedland, Gaubert and Han \cite{FGH})} If furthermore $\A$ is weakly irreducible, then $\rho(\A)$ is the unique $H^{++}$-eigenvalue of $\A$,
with the unique positive eigenvector, up to a positive scalar.

\item{\em(Chang, Pearson and Zhang \cite{CPZ})} If moreover $\A$ is irreducible, then $\rho(\A)$ is the unique $H^{+}$-eigenvalue of $\A$,
with the unique nonnegative eigenvector, up to a positive scalar.
\end{enumerate}
\end{thm}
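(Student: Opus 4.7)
The plan is to establish the three parts in the order stated, moving from the weakest to the strongest hypothesis, using a combination of a nonlinear fixed-point argument, a Collatz--Wielandt type variational characterization, and an approximation argument.

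First I would set up the nonlinear Perron map. Define, for $\x \in \R^n_{\ge 0}\setminus\{0\}$,
$$T(\x)_i = \bigl((\A\x^{m-1})_i\bigr)^{1/(m-1)}, \qquad i\in[n],$$
and normalize it to a continuous self-map $\tilde T$ of the standard simplex $\Delta=\{\x\ge 0 : \sum_i x_i = 1\}$ (using the $(m-1)$-norm for the normalization so that $\tilde T$ is well-defined whenever $T(\x)\ne 0$). Brouwer's fixed point theorem then produces a fixed point $\x_0\in \Delta$, and unwinding the definition gives a nonnegative eigenvector of $\A$ with some nonnegative eigenvalue $\mu$. To identify $\mu$ with $\rho(\A)$, I would prove the Collatz--Wielandt inequalities
$$\min_{i:\,x_i>0}\frac{(\A\x^{m-1})_i}{x_i^{m-1}} \;\le\; \rho(\A) \;\le\; \max_{i:\,x_i>0}\frac{(\A\x^{m-1})_i}{x_i^{m-1}}$$
for all $\x\ge 0$ in the appropriate domain (the upper bound comes from considering left eigenvectors of $\A^T$ or from Gelfand-type norm estimates on iterates of $T$, and the lower bound is immediate).

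For part (1), the tensor $\A$ need not support a positive eigenvector, so I would use an approximation. Set $\A_\epsilon = \A + \epsilon \mathcal{J}$, where $\mathcal{J}$ is the all-ones tensor of the same order/dimension; each $\A_\epsilon$ is entrywise positive, so in particular weakly irreducible, and parts (2) or (3) (proved independently below) yield an eigenvector $\x_\epsilon \in \Delta$ with $\A_\epsilon \x_\epsilon^{m-1} = \rho(\A_\epsilon)\x_\epsilon^{[m-1]}$. Since $\rho$ depends continuously on the tensor (as the largest modulus root of $\varphi_{\A}$), passing to a convergent subsequence as $\epsilon\to 0^+$ produces a nonnegative eigenvector of $\A$ with eigenvalue $\rho(\A)$.

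For parts (2) and (3), I would first show the eigenvector is strictly positive. If $\x\ge 0$ is an eigenvector for $\rho(\A)$ and $I=\{i:x_i=0\}\ne\emptyset$, the eigenequation evaluated at any $i\in I$ forces $a_{ii_2\cdots i_m}x_{i_2}\cdots x_{i_m}=0$ for every multi-index, which in the irreducible case directly contradicts the nonexistence of a proper invariant index set, and in the weakly irreducible case contradicts the strong connectedness of $G(\A)$ by propagating zeros along arcs out of $I$. Uniqueness up to scalar is the main obstacle: given two positive eigenvectors $\x,\y$ for $\rho(\A)$, consider $t^*=\max\{t>0: \y - t\x \ge 0\}$. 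The vector $\y - t^*\x$ is a nonnegative eigenvector with at least one zero coordinate, so by the positivity argument above it must vanish, forcing $\y = t^*\x$. The subtle point is that this step genuinely requires a homogeneous degree-one operator to preserve the cone-ordering linearly on eigenvectors; since the eigenequation is polynomial rather than linear, I expect the hardest part to be justifying why the difference $\y-t^*\x$ still satisfies a useful inequality of the form $\A(\y-t^*\x)^{[m-1]}$-like, for which the standard trick is to pass to the logarithmic coordinates $u_i=\log x_i$ and show the normalized Perron map is a contraction in Hilbert's projective metric on the open positive cone, a technique that works cleanly under weak irreducibility and gives both existence and uniqueness in one stroke.
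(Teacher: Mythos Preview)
The paper does not prove this theorem at all: it is stated as background, with each of the three parts attributed to the cited references (Yang--Yang \cite{YY1}, Friedland--Gaubert--Han \cite{FGH}, Chang--Pearson--Zhang \cite{CPZ}). There is no ``paper's own proof'' to compare against; the theorem is simply quoted.

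Your sketch is a reasonable outline of how those cited papers actually proceed (Brouwer on the simplex for existence, approximation by strictly positive tensors for part~(1), and a Hilbert-metric contraction for uniqueness in the weakly irreducible case). One point deserves correction, though, because as written it contains a false step before the patch. You write that ``the vector $\y - t^*\x$ is a nonnegative eigenvector with at least one zero coordinate'': this is not true for $m\ge 3$, since the eigenequation $\A\x^{m-1}=\rho\,\x^{[m-1]}$ is nonlinear in $\x$ and differences of eigenvectors need not be eigenvectors. You immediately flag this yourself and pivot to the Hilbert projective metric, which is the right repair and is exactly what \cite{FGH} does; but the argument as you first stated it is invalid, not merely ``subtle''. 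If you intend to submit a proof, drop the $t^*$ paragraph entirely and go straight to the contraction argument: show that the normalized map $\tilde T$ is a strict contraction on the open positive cone modulo scaling in Hilbert's metric (weak irreducibility gives the needed primitivity of iterates), and invoke the Banach fixed-point theorem for both existence and uniqueness.

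A second minor gap: your Brouwer argument needs a case split when $\A\x^{m-1}=0$ for some boundary $\x\in\Delta$, since then $\tilde T$ is undefined there. The standard fix is either to perturb first (so $\A_\epsilon$ is strictly positive and $T$ never vanishes on $\Delta$) or to extend $\tilde T$ continuously by a retraction; either works, but it should be said.
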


\begin{lem}\cite{YY3} \label{ev}
Let $\A$ be a weakly irreducible nonnegative tensor.
Let $y$ be an eigenvector of $\A$ corresponding to an eigenvalue $\la$ with $|\la|=\rho(\A)$.
Then $|y|$ is the unique positive eigenvector corresponding to $\rho(\A)$ up to a scalar.
\end{lem}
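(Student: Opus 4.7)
The plan is to show that $|y|$ is itself a positive eigenvector of $\A$ for $\rho(\A)$; the uniqueness-up-to-scalar clause then follows immediately from Theorem~\ref{PF1}(2). First, take absolute values componentwise in the eigenequation $\A y^{m-1}=\la y^{[m-1]}$. Using $|\la|=\rho(\A)$, the nonnegativity of $\A$, and the triangle inequality, one obtains
\[
\rho(\A)\,|y_i|^{m-1}=\left|\sum_{i_2,\ldots,i_m}a_{i i_2\cdots i_m}y_{i_2}\cdots y_{i_m}\right|\le\sum_{i_2,\ldots,i_m}a_{i i_2\cdots i_m}|y_{i_2}|\cdots|y_{i_m}|=(\A\,|y|^{m-1})_i
\]
for every $i\in[n]$, so $|y|$ satisfies the subeigenvector inequality $\A\,|y|^{m-1}\ge\rho(\A)\,|y|^{[m-1]}$.

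Next, invoke Theorem~\ref{PF1}(2) to obtain a positive Perron eigenvector $u$, and set $\tau=\min_{i:\,|y|_i>0}u_i/|y|_i>0$. Then $\tau|y|\le u$ componentwise, with equality at some index $i_0$. Monotonicity of $x\mapsto\A x^{m-1}$ on the nonnegative orthant, combined with the subeigenvector inequality applied at $i_0$, gives
\[
\rho(\A)(\tau|y|_{i_0})^{m-1}\le\big(\A(\tau|y|)^{m-1}\big)_{i_0}\le(\A u^{m-1})_{i_0}=\rho(\A)\,u_{i_0}^{m-1}=\rho(\A)(\tau|y|_{i_0})^{m-1},
\]
so all of these are equalities. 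Writing out $(\A(\tau|y|)^{m-1})_{i_0}=(\A u^{m-1})_{i_0}$ as a sum of nonnegative summands bounded term-by-term, and using the positivity of $u$, then forces $\tau|y|_{i_j}=u_{i_j}$ for every $i_j$ that appears among the last $m-1$ indices of some nonzero entry $a_{i_0 i_2\cdots i_m}$.

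Finally, let $S=\{i\in[n]:\tau|y|_i=u_i\}$. The preceding paragraph uses only the condition $\tau|y|_i=u_i$, so it applies at every $i\in S$, showing that $S$ is closed under taking out-neighbors in $G(\A)$. Since $i_0\in S$ and $G(\A)$ is strongly connected (weak irreducibility of $\A$), we conclude $S=[n]$, i.e.\ $\tau|y|=u$. Thus $|y|$ is a positive scalar multiple of the Perron vector $u$, which proves the lemma. The main obstacle is this last step: bootstrapping from a single-coordinate equality to the global identity $\tau|y|=u$ is precisely where weak irreducibility enters, through the combination of strong connectivity of $G(\A)$ and the multilinear product structure of $\A x^{m-1}$.
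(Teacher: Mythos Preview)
The paper does not supply its own proof of this lemma; it is quoted from Yang and Yang \cite{YY3} (and, as the paper notes just after Theorem~\ref{PF2}, is reproduced in the appendix of \cite{FBH}). Your argument is correct and is essentially the standard one: the triangle inequality yields the subeigenvector inequality $\A|y|^{m-1}\ge\rho(\A)\,|y|^{[m-1]}$, and comparison with the Perron vector $u$ via the extremal ratio $\tau$, together with strong connectivity of $G(\A)$, forces $\tau|y|=u$. The term-by-term step is sound because $u>0$ makes each product $u_{i_2}\cdots u_{i_m}$ strictly positive, so equality of the sums with $\tau|y|_{i_j}\le u_{i_j}$ in every factor forces equality in each factor. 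This is precisely how such results are proved in the nonnegative-tensor Perron--Frobenius literature, so there is nothing to compare against within the present paper.
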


According to the definition of tensor product in \cite{Shao}, for a tensor $\A$ of order $m$ and dimension $n$, and two diagonal matrices $P,Q$ both of dimension $n$,
the product $P\A Q$ has the same order and dimension as $\A$, whose entries are given by
\[(P\A Q)_{i_1i_2\ldots i_m}=p_{i_1i_1}a_{i_1i_2\ldots i_m}q_{i_2i_2}\ldots q_{i_mi_m}.\]
If $P=Q^{-1}$, then $\A$ and $P^{m-1}\A Q$ are called \emph{diagonal similar}.
It is proved that two diagonal similar tensors have the same spectrum \cite{Shao}.

\begin{thm}\label{PF2}\cite{YY3}
Let $\A$ and $\B$ be $m$-th order $n$-dimensional tensors with $|\B|\le \A$. Then
\begin{enumerate}

\item[(1)] $\rho(\B)\le \rho(\A)$.

\item[(2)] If $\A$ is weakly irreducible and $\rho(\B)=\rho(\A)$, where $\la=\rho(\A)e^{\i\theta}$ is
an eigenvalue of $\B$ corresponding to an eigenvector $\y$, then $\y=(y_1, \cdots, y_n)$ contains no zero entries, and $\A=e^{-\i\theta}D^{-(m-1)}\B D$,
where $D=\diag(\frac{y_1}{|y_1|}, \cdots, \frac{y_n}{|y_n|})$.
\end{enumerate}
\end{thm}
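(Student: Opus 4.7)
I would prove both parts from a single modulus (triangle-inequality) computation combined with the Perron-Frobenius structure supplied by Theorem \ref{PF1}, Lemma \ref{ev}, and Shao's invariance of spectrum under diagonal similarity.

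\textbf{Part (1).} Let $\y$ be an eigenvector of $\B$ with $|\la|=\rho(\B)$. Applying $|\cdot|$ entry-wise to $\B\y^{m-1}=\la\y^{[m-1]}$ and using $|\B|\le\A$ gives
$$\rho(\B)\,|\y|^{[m-1]} \;=\; |\B\y^{m-1}| \;\le\; |\B|\,|\y|^{m-1} \;\le\; \A\,|\y|^{m-1},$$
so $\x:=|\y|$ is a nonzero nonnegative vector with $\A\x^{m-1}\ge\rho(\B)\x^{[m-1]}$. A Collatz-Wielandt lower bound for nonnegative tensors then yields $\rho(\A)\ge\rho(\B)$; if $\A$ is not itself weakly irreducible, I first replace it by $\A+\epsilon\mathcal{J}$ (with $\mathcal{J}$ the all-ones tensor), apply Theorem \ref{PF1} to the perturbation, and let $\epsilon\to 0^{+}$.

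\textbf{Part (2), showing $|\y|>0$.} Now $|\la|=\rho(\A)=\rho(\B)$ and $\A$ is weakly irreducible, so the previous display gives $\A|\y|^{m-1}\ge\rho(\A)|\y|^{[m-1]}$. Let $\mathbf{u}>0$ be the Perron vector (Theorem \ref{PF1}(2)) and form the diagonal-similar tensor $\hat\A:=D_{\mathbf{u}}^{-(m-1)}\A D_{\mathbf{u}}$, so that $\hat\A\mathbf{1}^{m-1}=\rho(\A)\mathbf{1}$ (its row sums equal $\rho(\A)$). Write $\hat\x=D_{\mathbf{u}}^{-1}|\y|$, rescaled so that $\max_i\hat{x}_i=1$, and let $I_0=\{i:\hat{x}_i=1\}$. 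For each $i\in I_0$,
$$\rho(\A) \;\le\; (\hat\A\hat\x^{m-1})_i \;=\; \sum_{i_2,\ldots,i_m}\hat{a}_{ii_2\cdots i_m}\hat{x}_{i_2}\cdots\hat{x}_{i_m} \;\le\; \sum_{i_2,\ldots,i_m}\hat{a}_{ii_2\cdots i_m} \;=\; \rho(\A),$$
which forces $\hat{x}_{i_2}=\cdots=\hat{x}_{i_m}=1$ whenever $\hat{a}_{ii_2\cdots i_m}>0$. Hence $I_0$ is closed under the outgoing arcs of $G(\A)=G(\hat\A)$, and strong connectivity yields $I_0=[n]$. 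Thus $|\y|$ is a positive multiple of $\mathbf{u}$ (no zero entries), and in particular $\A|\y|^{m-1}=\rho(\A)|\y|^{[m-1]}$.

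\textbf{Recovering the diagonal similarity, and the main obstacle.} Write $y_j=|y_j|e^{\i\phi_j}$. The eigenequation $\B\y^{m-1}=\la\y^{[m-1]}$ reads, entry-wise,
$$e^{-\i\theta}\sum_{i_2,\ldots,i_m}b_{ii_2\cdots i_m}\,y_{i_2}\cdots y_{i_m} \;=\; \rho(\A)|y_i|^{m-1}\,e^{\i(m-1)\phi_i},$$
whose right-hand side has modulus $\rho(\A)|y_i|^{m-1}=(\A|\y|^{m-1})_i$, while each summand on the left has modulus at most $a_{ii_2\cdots i_m}|y_{i_2}|\cdots|y_{i_m}|$. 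Equality in the triangle inequality therefore forces $|b_{ii_2\cdots i_m}|=a_{ii_2\cdots i_m}$ whenever $a_{ii_2\cdots i_m}>0$ \emph{and} a common argument of the nonzero summands, giving
$$b_{ii_2\cdots i_m} \;=\; a_{ii_2\cdots i_m}\,e^{\i\theta+\i(m-1)\phi_i-\i(\phi_{i_2}+\cdots+\phi_{i_m})}.$$
With $D=\diag(e^{\i\phi_1},\ldots,e^{\i\phi_n})$ and the tensor-product formula recalled just before the theorem, this is exactly $\A=e^{-\i\theta}D^{-(m-1)}\B D$. The delicate step of the whole argument is the equality upgrade in the previous paragraph: matrices would use a left Perron vector, but tensors admit no such linear pairing, so the substitute --- similarity to a row-stochastic tensor followed by the maximum-propagation argument along $G(\A)$ --- is where weak irreducibility is genuinely consumed; once that is in place the phase bookkeeping is routine.
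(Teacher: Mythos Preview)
The paper does not actually contain a proof of this theorem: it is quoted from \cite{YY3}, and the text explicitly remarks that ``Lemma \ref{ev} and Theorem \ref{PF2} were given by Yang and Yang \cite{YY3} \ldots\ and also be rewritten in the Appendix of \cite{FBH}''. So there is no in-paper proof to compare against.

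That said, your argument is correct and is essentially the standard Yang--Yang proof. The modulus inequality $\rho(\B)|\y|^{[m-1]}\le \A|\y|^{m-1}$ plus Collatz--Wielandt gives (1); the upgrade to equality via diagonal similarity to a tensor with constant row sums and the ``maximum set is closed under out-arcs of $G(\A)$'' propagation is exactly how weak irreducibility is used in the literature; and the phase analysis from equality in the triangle inequality is the usual way to recover $\A=e^{-\i\theta}D^{-(m-1)}\B D$. One small point worth making explicit in your write-up: when you conclude $|b_{ii_2\cdots i_m}|=a_{ii_2\cdots i_m}$, you are using two equalities at once, namely $|\sum|=\sum|b\cdots|$ \emph{and} $\sum|b\cdots|=\sum a\cdots$, the second of which needs $|y_j|>0$ for all $j$ (which you have just established). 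Stating this chain of equalities cleanly would remove any doubt about the order of the deductions.
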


Lemma \ref{ev} and Theorem \ref{PF2} were given by Yang and Yang \cite{YY3} and posted in arXiv, and also be rewritten in the Appendix of \cite{FBH} for the completeness of the paper.




\begin{Def}\cite{FHB}\label{ell-sym}
Let $\A$ be an $m$-th order $n$-dimensional tensor, and let $\ell$ be a positive integer.
The tensor $\A$ is called \emph{spectral $\ell$-symmetric} if
\begin{equation}\label{sym-For}\Spec(\A)=e^{\i \frac{2\pi}{\ell}}\Spec(\A).\end{equation}
The maximum number $\ell$ such that (\ref{sym-For}) holds is called the \emph{cyclic index} of $\A$ \cite{CPZ2}.
\end{Def}


%

We now introduce a group associated with a general tensor $\A$ of order $m$ and dimension $n$ \cite{FHB}.
Let $\ell$ be a positive integer.
For $j=0,1,\ldots,\ell-1$, define
\begin{equation} \label{Dg}
\begin{split}
\Dg^{(j)}&=\{D: \A=e^{-\i \frac{2\pi j}{\ell}}D^{-(m-1)}\A D, d_{11}=1\},\\
\Dg &=\cup_{j=0}^{\ell-1}\Dg^{(j)},
\end{split}
\end{equation}
where  $D$ is an $n \times n$ invertible diagonal matrix in above definition.
Sometimes we use $\Dg(\A)$ and $\Dg^{(j)}(\A)$ to avoid confusions.

\begin{lem}
Let $\A$ be an $m$-th order $n$-dimensional nonnegative weakly irreducible tensor.
Let $\Dg^{(j)}$ be as defined in (\ref{Dg}) for $j=1,\ldots,\ell-1$.
Then the following conditions are equivalent.

\begin{enumerate}

\item $\A$ is spectral $\ell$-symmetric.

\item $\la_j=\rho(\A)e^{\i \frac{2\pi j}{\ell}}$ is an eigenvalue of $\A$ for $j=1,\ldots,\ell-1$.

\item $\Dg^{(j)} \ne \emptyset$ for $j=1,\ldots,\ell-1$.

\end{enumerate}

\begin{proof}
If the condition (1) holds, then $\rho(\A)e^{\i \frac{2\pi}{\ell}}$ is an eigenvalue of $\A$.
So, by Theorem \ref{PF2}(2), there exists a diagonal matrix $D$ such that $\A=e^{-\i\frac{2\pi}{\ell}}D^{-(m-1)}\A D$,
implying that $\A=e^{-\i\frac{2\pi j}{\ell}}D^{-j(m-1)}\A D^j$ for $j=1,\ldots,\ell-1$.
Hence the conditions (2) and (3) both hold.
If (2) holds, then (3) holds by Theorem \ref{PF2}(2), which implies that (1) holds by considering $\Dg^{(1)}$.
\end{proof}

\end{lem}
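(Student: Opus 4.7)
The plan is to prove the three equivalences cyclically, leveraging Theorem \ref{PF2}(2) as the main tool and the elementary fact that diagonally similar tensors share a spectrum. Since the statement organises naturally around $\Dg^{(j)}$, I would proceed $(1) \Rightarrow (2) \Rightarrow (3) \Rightarrow (1)$.

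For $(1) \Rightarrow (2)$: by hypothesis $\Spec(\A) = e^{\i 2\pi/\ell}\Spec(\A)$, and $\rho(\A) \in \Spec(\A)$, so iterating gives $\rho(\A) e^{\i 2\pi j/\ell} \in \Spec(\A)$ for every $j$. This step is essentially a tautology from Definition \ref{ell-sym}.

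For $(2) \Rightarrow (3)$: fix $j$, let $\la_j = \rho(\A)e^{\i 2\pi j/\ell}$, and apply Theorem \ref{PF2}(2) with $\B = \A$. Since $|\B| = \A$ and $\rho(\B) = \rho(\A)$, the theorem yields a nonvanishing eigenvector $\y$ for $\la_j$ and, with $D = \diag(y_i/|y_i|)$, the identity $\A = e^{-\i 2\pi j/\ell} D^{-(m-1)} \A D$. By multiplying $\y$ by the scalar $|y_1|/y_1$, we may assume $d_{11} = 1$, so $D \in \Dg^{(j)}$.

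For $(3) \Rightarrow (1)$: we only need $\Dg^{(1)} \neq \emptyset$. Pick $D \in \Dg^{(1)}$; then $e^{\i 2\pi/\ell} \A = D^{-(m-1)} \A D$, exhibiting a diagonal similarity between $e^{\i 2\pi/\ell}\A$ and $\A$. By the spectrum-preservation result of \cite{Shao} quoted right before Theorem \ref{PF2}, $\Spec(e^{\i 2\pi/\ell}\A) = \Spec(\A)$. On the other hand, any eigenpair $(\la, \x)$ of $\A$ yields $(e^{\i 2\pi/\ell}\la, \x)$ as an eigenpair of $e^{\i 2\pi/\ell}\A$ (multiplying the eigenequation by $e^{\i 2\pi/\ell}$), so $\Spec(e^{\i 2\pi/\ell}\A) = e^{\i 2\pi/\ell}\Spec(\A)$. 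Combining gives $\Spec(\A) = e^{\i 2\pi/\ell}\Spec(\A)$, i.e.\ condition (1).

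I expect no serious obstacle: the key input is Theorem \ref{PF2}(2), which forces an eigenvector on the spectral circle to be nonvanishing and produces the diagonal conjugation. The only minor care needed is (a) the scalar normalisation $d_{11} = 1$ in the definition of $\Dg^{(j)}$, which is free because $\Dg^{(j)}$ is invariant under rescaling of $D$; and (b) verifying that diagonal similarity in the sense of \cite{Shao} corresponds exactly to the conjugation $D^{-(m-1)} \A D$ appearing in (\ref{Dg}), which follows by taking $P = D^{-1}$ and $Q = D$ in the definition $P \A Q$ recalled earlier.
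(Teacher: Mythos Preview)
Your proposal is correct and follows essentially the same route as the paper's proof: both rely on Theorem~\ref{PF2}(2) to pass from an eigenvalue on the spectral circle to a diagonal $D$, and on the spectrum-preservation of diagonal similarity for the implication back to spectral $\ell$-symmetry. The only cosmetic difference is that the paper applies Theorem~\ref{PF2}(2) once to obtain a single $D\in\Dg^{(1)}$ and then iterates, observing that $D^{j}\in\Dg^{(j)}$, whereas you invoke Theorem~\ref{PF2}(2) separately for each $j$; both arguments are equally valid, and your write-up of $(3)\Rightarrow(1)$ is in fact more explicit than the paper's.
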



By Lemma \ref{group}, $\Dg$ is an abelian group containing $\Dg^{(0)}$ as a subgroup.
Furthermore, the groups  $\Dg$ and $\Dg^{(0)}$ are only determined by the support of $\A$ \cite{FBH}.
So, they are both combinatorial invariants of $\A$.

\begin{Def}\label{stab}\cite{FBH}
Let $\A$ be an $m$-th order $n$-dimensional tensor.
The \emph{stabilizing index} of $\A$, denoted by $s(\A)$, is defined as the cardinality of the group $\Dg^{(0)}(\A)$.
\end{Def}

Suppose that $\A$ is a nonnegative weakly irreducible tensor of dimension $n$, which is spectral $\ell$-symmetric.
Let $\la_j=\rho(\A)e^{\i \frac{2 \pi j}{\ell}}$ be the eigenvalue of $\A$, $j=0,1,\ldots,\ell-1$.
By Lemma \ref{ev} for any $\x \in \PV_{\la_j}$, $|\x|$ is the unique positive Perron vector of $\A$ up to a scalar.
Define
\begin{equation}\label{Dy}
D_\x=\diag\left(\frac{x_1}{|x_1|},\cdots,\frac{x_n}{|x_n|}\right).
\end{equation}
The following lemma will show that $\Dg^{(j)}(\A)$ is closely related to $\PV_{\la_j}$, and the stabilizing index $s(\A)=|\PV_{\rho(\A)}|$.


\begin{lem}\cite{FHB,FBH} \label{group}
Let $\A$ be an $m$-th order $n$-dimensional nonnegative weakly irreducible tensor, which is spectral $\ell$-symmetric.
Let $\la_j=\rho(\A)e^{\i \frac{2 \pi j}{\ell}}$ be the eigenvalue of $\A$, $j=0,1,\ldots,\ell-1$.
Let $\Dg$ and $\Dg^{(j)}$ be as defined in (\ref{Dg}) for $j=0,1,\ldots,\ell-1$.
Then the following results hold.

\begin{enumerate}

\item $\Dg$ is an abelian group under the usual matrix multiplication, where $\Dg^{(0)}$ is a subgroup of $\Dg$, and
$\Dg^{(j)}$ is a coset of $\Dg^{(0)}$ in $\Dg$ for $j \in [\ell-1]$.

\item there is a bijection between $\PV_{\la_j}(\A)$ and $\Dg^{(j)}(\A)$, and
\begin{equation} \label{2nd}
\Dg^{(j)}(\A)=\{D_\x: \x \in \PV_{\la_j}(\A), x_1=1\}, j=0,1,\ldots,\ell-1.
\end{equation}

\item If further $\A$ is combinatorial symmetric, then $\ell\mid m$, and $D^{m}=\I$ for any $D \in \Dg$.
\end{enumerate}
\end{lem}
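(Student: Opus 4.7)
The plan is to verify each of the three assertions in turn, leaning on the Perron--Frobenius statements (Theorem \ref{PF2} and Lemma \ref{ev}) and on the combinatorial interpretation of the defining relation $\A=e^{-\i\frac{2\pi j}{\ell}}D^{-(m-1)}\A D$.

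\textbf{Part (1).} To see that $\Dg$ is a group under matrix multiplication, I would read the defining identity entrywise: $D\in\Dg^{(j)}$ is equivalent to $D^{-(m-1)}\A D = e^{\i\frac{2\pi j}{\ell}}\A$. The identity matrix clearly lies in $\Dg^{(0)}$. If $D_1\in\Dg^{(j_1)}$ and $D_2\in\Dg^{(j_2)}$, then since diagonal matrices commute one obtains $(D_1D_2)^{-(m-1)}\A(D_1D_2) = e^{\i\frac{2\pi(j_1+j_2)}{\ell}}\A$ by substituting, and $(D_1D_2)_{11}=1$; thus $\Dg$ is closed under multiplication, with the index adding modulo $\ell$. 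Inverses come by multiplying the defining relation by $D^{m-1}$ on the left and $D^{-1}$ on the right. Commutativity is automatic for diagonal matrices. Nonemptiness of each $\Dg^{(j)}$ is the content of the preceding lemma (the equivalence $(1)\Leftrightarrow(3)$), so the quotient map $\Dg\to\Z/\ell$ sending $\Dg^{(j)}$ to $j$ is a surjective group homomorphism whose kernel is $\Dg^{(0)}$; this shows $\Dg^{(0)}$ is a subgroup and each $\Dg^{(j)}$ is the coset $D_j\Dg^{(0)}$ for any chosen $D_j\in\Dg^{(j)}$.

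\textbf{Part (2).} For the bijection, I would go in both directions. Given $\x\in \PV_{\la_j}(\A)$ with $x_1=1$, Theorem \ref{PF2}(2) guarantees $\x$ has no zero entries and produces $D_\x\in\Dg^{(j)}$ with $(D_\x)_{11}=1$. Conversely, given $D\in\Dg^{(j)}$, let $\v$ be the (normalized) Perron vector and set $\y=D\v$. A direct unfolding of the defining equation $D^{-(m-1)}\A D = e^{\i\frac{2\pi j}{\ell}}\A$ applied to $\v^{m-1}$ shows $\A\y^{m-1}=\la_j \y^{[m-1]}$, so $\y\in\V_{\la_j}$, and rescaling so that $y_1=1$ produces a unique projective representative. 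Finally, uniqueness of $|\y|$ as the Perron vector (Lemma \ref{ev}) ensures the two constructions are mutually inverse, giving the bijection and the explicit description (\ref{2nd}).

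\textbf{Part (3).} Assume combinatorial symmetry. Fix any $D\in\Dg^{(j)}$ and read the defining identity at a single nonzero entry: whenever $a_{i_1\cdots i_m}\ne 0$,
\begin{equation*}
d_{i_1 i_1}^{m} \;=\; e^{-\i\frac{2\pi j}{\ell}}\, d_{i_1 i_1}d_{i_2 i_2}\cdots d_{i_m i_m}.
\end{equation*}
Since $\sp(\A)$ is invariant under all permutations of indices, the same identity holds after any permutation of $(i_1,\ldots,i_m)$; but the right hand side is symmetric in the indices, so $d_{i_k i_k}^m$ takes the common value $e^{-\i\frac{2\pi j}{\ell}}d_{i_1 i_1}\cdots d_{i_m i_m}$ for every $k\in[m]$. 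Multiplying these $m$ equalities gives $(d_{i_1 i_1}\cdots d_{i_m i_m})^m = e^{-\i\frac{2\pi j m}{\ell}}(d_{i_1 i_1}\cdots d_{i_m i_m})^m$, and since the $d_{i_k i_k}$ are nonzero we conclude $e^{\i\frac{2\pi j m}{\ell}}=1$; taking $j=1$ (possible by spectral $\ell$-symmetry) yields $\ell\mid m$. For the second assertion, the same display says $d_{i_1 i_1}^{m}=d_{i_k i_k}^{m}$ whenever $i_1$ and $i_k$ appear together in a nonzero entry of $\A$, i.e. whenever $(i_1,i_k)$ is an arc of $G(\A)$. Weak irreducibility makes $G(\A)$ strongly connected, so $d_{ii}^m$ is constant in $i$; combined with the normalization $d_{11}=1$, this forces $D^m=\I$.

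The main obstacle is Part (3): keeping track of how combinatorial symmetry (which is weaker than symmetry of $\A$ itself) allows permuting indices in the scalar equation despite the fact that the values $a_{i_1\cdots i_m}$ themselves need not be permutation-invariant. The trick is that only the support pattern needs to be permutation-invariant for the indicator ``$d_{i_k i_k}^m$ equals the common symmetric product'' to persist, and then weak irreducibility propagates the resulting equality to all diagonal entries.
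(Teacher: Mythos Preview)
The paper does not supply its own proof of this lemma: it is stated with the citation \cite{FHB,FBH} and immediately followed by ``Now we arrive at the main result of this paper,'' so there is no in-paper argument to compare against. Your proposal is therefore being assessed on its own merits, and it is correct in all three parts.

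A small point worth making explicit in Part~(2): to conclude that the two constructions are mutually inverse you need $|d_{ii}|=1$ for every $D\in\Dg^{(j)}$, so that $D_{D\v}=D$. This does follow from your invocation of Lemma~\ref{ev}: since $|D\v|=|D|\,\v$ is a positive eigenvector for $\rho(\A)$, uniqueness forces $|D|\v=c\v$ for some scalar $c>0$, whence $|d_{ii}|=c$ for all $i$, and the normalization $d_{11}=1$ gives $c=1$. You have the right reference; spelling out this one line would remove any doubt. Otherwise the argument---the homomorphism $\Dg\to\Z/\ell\Z$ in Part~(1), the explicit inverse $D\mapsto D\v$ in Part~(2), and the propagation of $d_{ii}^m=1$ along the strongly connected graph $G(\A)$ in Part~(3)---is exactly the natural route and matches what one finds in the cited sources.
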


Now we arrive at the main result of this paper.
\begin{thm}\label{main}
Let $\A$ be an $m$-th order $n$-dimensional nonnegative weakly irreducible tensor.
Then $\PV_{\rho(\A)}$ has dimension zero, i.e. there are finite many eigenvectors of $\A$ corresponding to $\rho(\A)$ up to a scalar.
\end{thm}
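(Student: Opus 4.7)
The plan is to apply Corollary \ref{finite} by exhibiting $\PV_{\rho(\A)}$ simultaneously as a projective variety in $\mathbb{P}^{n-1}$ and as a subvariety of a single affine chart, so that the affine-piece map becomes a regular morphism from a projective variety to an affine variety. The key input needed to fit $\PV_{\rho(\A)}$ into one affine piece is Lemma \ref{ev}, which forces all coordinates of every eigenvector associated with $\rho(\A)$ to be nonzero.

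First I would observe that $\PV_{\rho(\A)}$ is cut out in $\mathbb{P}^{n-1}$ by the system of homogeneous polynomials $(\A\x^{m-1})_i-\rho(\A)x_i^{m-1}=0$ for $i\in[n]$, each of degree $m-1$. So $\PV_{\rho(\A)}$ is a closed subset of $\mathbb{P}^{n-1}$, hence a projective variety in the sense of Section~2.

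Next I would invoke Lemma \ref{ev}: for any $\x\in\PV_{\rho(\A)}$, the vector $|\x|$ is a positive scalar multiple of the Perron eigenvector of $\A$, so every coordinate $x_i$ is nonzero. Consequently $\PV_{\rho(\A)}$ is contained in the affine piece $\mathbb{A}_1^{n-1}=\mathbb{P}^{n-1}\setminus V(x_1)$. The affine-piece isomorphism $\phi_1\colon\mathbb{A}_1^{n-1}\to\mathbb{A}^{n-1}$, $[x_1:\cdots:x_n]\mapsto(x_2/x_1,\ldots,x_n/x_1)$, restricts to a regular map $\phi_1\colon\PV_{\rho(\A)}\to\mathbb{A}^{n-1}$ from a projective variety to an affine variety. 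By Corollary \ref{finite} the image $\phi_1(\PV_{\rho(\A)})$ is finite, and since $\phi_1$ is injective on its domain, $\PV_{\rho(\A)}$ itself is finite, i.e.\ $\dim\PV_{\rho(\A)}=0$.

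The only nontrivial ingredient is Lemma \ref{ev}: without it we could not localize $\PV_{\rho(\A)}$ inside a single affine chart, and Corollary \ref{finite} would have nothing to act on. Everything else — identifying $\PV_{\rho(\A)}$ as a closed projective subset, checking regularity of the restricted chart map, and concluding finiteness via decomposition into irreducible components (through Corollary \ref{one}) — is a formal consequence of the machinery already set up in Section~2. I do not anticipate any computational obstacle; the whole argument should consist of these three short observations assembled in order.
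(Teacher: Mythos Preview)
Your proposal is correct and follows essentially the same argument as the paper: identify $\PV_{\rho(\A)}$ as a projective variety, use Lemma~\ref{ev} to place it inside the affine piece where $x_1\ne 0$, and apply Corollary~\ref{finite} to the restriction of the chart map $[x_1:\cdots:x_n]\mapsto(x_2/x_1,\ldots,x_n/x_1)$ to conclude finiteness. If anything, your formulation is slightly cleaner in that you explicitly note the map is only defined on the affine piece $\mathbb{A}_1^{n-1}$, whereas the paper writes it as a map on all of $\mathbb{P}^{n-1}$.
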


\begin{proof}
Note that $\PV_{\rho(\A)}$ is a projective variety over the field $k=\mathbb{C}$;
 and for any $\x \in \PV_{\rho(\A)}$, $\x$ contains no zero entries by Lemma \ref{ev}.
Here we denote $\x=[x_1:\cdots:x_n]$ for the class of $\x$.
Similar to the map $\phi_0$ defined in (\ref{A0A}), we define
$$ f: \mathbb{P}^{n-1} \to \mathbb{A}^{n-1}, [x_1:x_2:\cdots:x_n] \mapsto (x_2/x_1,\ldots,x_n/x_1).$$
Then $f$ is a regular map; indeed $f$ is an isomorphism.
Restricting $f$ to $\PV_{\rho(\A)}$, $f(\PV_{\rho(\A)})$ is a finite set by Corollary \ref{finite}.
As $f$ is also a bijection, we get $\PV_{\rho(\A)}$ is finite.
\end{proof}

\begin{cor}\label{main2}
Let $\A$ be an $m$-th order $n$-dimensional nonnegative weakly irreducible tensor, which is spectral $\ell$-symmetric.
Then for each eigenvalue $\la_j=\rho(\A)e^{\i \frac{2\pi j}{\ell}}$  for $j \in [\ell-1]$, $|\PV_{\la_j}(\A)|=|\PV_{\rho(\A)}|$,
in particular there are finite many eigenvectors of $\A$ corresponding to $\la_j$ up to a scalar.
\end{cor}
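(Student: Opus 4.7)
The plan is to deduce this corollary almost directly from Theorem \ref{main} together with Lemma \ref{group}, essentially by shifting every eigenvector at $\rho(\A)$ to an eigenvector at $\la_j$ via multiplication by a fixed diagonal matrix. The strategy is first to show $|\PV_{\la_j}(\A)|=|\PV_{\rho(\A)}|$ purely at the level of the groups $\Dg^{(j)}$, and then invoke Theorem \ref{main} to obtain finiteness.

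First I would apply Lemma \ref{group}(2) to translate the problem from projective eigenvarieties to the sets $\Dg^{(j)}(\A)$: the map $\x\mapsto D_\x$ (normalized by $x_1=1$) gives a bijection $\PV_{\la_j}(\A)\to \Dg^{(j)}(\A)$ for every $j=0,1,\ldots,\ell-1$. In particular $|\PV_{\la_j}(\A)|=|\Dg^{(j)}(\A)|$ and $|\PV_{\rho(\A)}|=|\Dg^{(0)}(\A)|$. Next, by Lemma \ref{group}(1), $\Dg^{(j)}$ is a coset of the subgroup $\Dg^{(0)}$ inside the abelian group $\Dg$, and any coset of a subgroup has the same cardinality as the subgroup; consequently $|\Dg^{(j)}(\A)|=|\Dg^{(0)}(\A)|$. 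Chaining these equalities yields $|\PV_{\la_j}(\A)|=|\PV_{\rho(\A)}|$.

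For the second conclusion I would simply observe that Theorem \ref{main} states $\PV_{\rho(\A)}$ is a zero-dimensional projective variety, hence finite; the cardinality equality just established then forces $\PV_{\la_j}(\A)$ to be finite as well, so it too is zero-dimensional.

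There is really no main obstacle here; the corollary is a formal consequence of prior results. The only point that deserves a brief justification, and which I would spell out explicitly, is that one uses spectral $\ell$-symmetry to guarantee $\Dg^{(j)}\ne \emptyset$ so that it is indeed a coset (and hence nonempty and of the same size as $\Dg^{(0)}$)—this is exactly the equivalence of conditions (1) and (3) in the lemma preceding Definition \ref{stab}. Once this is noted, the proof is a three-line chain of equalities followed by a citation of Theorem \ref{main}.
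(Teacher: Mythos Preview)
Your proposal is correct and follows essentially the same approach as the paper's own proof: use Lemma \ref{group}(2) to identify $|\PV_{\la_j}(\A)|$ with $|\Dg^{(j)}(\A)|$, use Lemma \ref{group}(1) to identify the latter with $|\Dg^{(0)}(\A)|$ via the coset structure, and then invoke Theorem \ref{main} for finiteness. Your additional remark that spectral $\ell$-symmetry is what guarantees $\Dg^{(j)}\ne\emptyset$ is a welcome clarification that the paper leaves implicit.
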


\begin{proof}
By the Lemma \ref{group}(2), for $j=0,1,\ldots, \ell-1$, $|\PV_{\la_j}(\A)|=|\Dg^{(j)}(\A)|$;
and by Lemma \ref{group}(1), $\Dg^{(j)}(\A)$ is a coset of $\Dg^{(0)}(\A)$, implying $|\Dg^{(j)}(\A)|=|\Dg^{(0)}(\A)|$.
So, by Theorem \ref{main}, we get $|\PV_{\la_j}(\A)|=|\PV_{\rho(\A)}|$ for $j\in [\ell-1]$.
\end{proof}

\section{Upper bounds of stabilizing index}
We have proved that for a nonnegative weakly irreducible tensor $\A$,
the stabilizing index $s(\A)=|\PV_{\rho(\A)}|$, which is finite and also means $\PV_{\rho(\A)}$ has dimension zero.
We now discuss the upper bound of $s(\A)$ for some special classes of nonnegative tensors $\A$ when $\A$ is at least weakly irreducible.

\begin{Def}\cite{CPZ, Pea2} A nonnegative matrix $M(\A)$ is called the \emph{majorization}
associated to the nonnegative tensor $\A=(a_{i_1}\ldots a_{i_m})$ of order $m$ and dimension $n$, if
the $(i, j)$-th entry of $M(\A)$ is defined to be $a_{ij\ldots j}$ for $i,j \in [n]$.
\end{Def}

\begin{Def}\cite{LiYY}
A tensor $I(\A)=(b_{i_1\ldots i_m})$ is called \emph{induced to} the tensor $\A=(a_{i_1}\ldots a_{i_m})$ of order $m$ and dimension $n$, if
$b_{i_1\ldots i_m}=a_{i_1}\ldots a_{i_m}$ if $i_2=\cdots=i_m=i$ for $i \in [n]$, and $b_{i_1\ldots i_m}=0$ otherwise.
\end{Def}

\begin{Def} Let $\A=(a_{i_1}\ldots a_{i_m})$ be a nonnegative tensor of order $m$ and dimension $n$. Then

\begin{enumerate}
\item \cite{Pea} $\A$ is called \emph{essentially positive}, if for any $i,j \in [n]$, $a_{ij\ldots j}>0$ or equivalently $\A x^{m-1}>0$ for any $x \gneq 0$.

\item \cite{ZQX} $\A$ is called \emph{weakly positive}, if for any $i,j \in [n]$ and $i \ne j$, $a_{ij\ldots j}>0$.

\item \cite{CPZ3,Pea2} $\A$ is called \emph{primitive}, if for some positive integer $r$ such that $T_\A^r(x)>0$ for any $x \gneq 0$, where $T_\A^r:=T_\A(T_\A^{r-1})$ and
$T_\A(x):=(\A x^{m-1})^{[\frac{1}{m-1}]}$.

\item \cite{FGH} $\A$ is called \emph{weakly primitive}, if the directed graph $G(\A)$ associated with $\A$ is strongly connected and the g.c.d of the lengths of its circuits is equal to one.

\item \cite{LiYY} $\A$ is called \emph{strongly irreducible} if $I(\A)$ is irreducible.

\item \cite{LiYY} $\A$ is called \emph{strongly primitive} if $I(\A)$ is primitive.

\end{enumerate}

\end{Def}

For a nonnegative tensor $\A$, it was shown in \cite[Lemma 1]{LiYY} that $I(\A)$ is irreducible (respectively, primitive) if and only if $M(\A)$ is irreducible
(respectively, primitive).
So, strongly irreducibility (respectively, strongly primitivity) implies irreducibility (respectively, primitivity) as $\A \ge I(\A)$.
Also, irreducibility implies weakly irreducibility by their definitions, and primitivity implies weakly primitivity by \cite[Lemma 3.3]{HHQ}

By the definitions, if $\A$ is essential positive (respectively, weakly positive), then $\A$ is strongly primitive (respectively, strongly irreducible).
A example is given that a tensor $\A=(a_{ijk})$ of order $3$ and dimension $2$ is weakly positive by not primitive as follows (\cite[Example 2.5]{Pea2}, \cite[Example 3.4]{ZQX}):
$$a_{122}=a_{211}=1, a_{ijk}=0 \hbox{~elsewhere}.$$
But a weakly positive tensor of dimension greater than $2$ is strongly primitive.

It is known that a nonnegative matrix $A$ is irreducible if and only if $A+\I$ is primitive.
So, $\A$ is strongly irreducible (respectively, irreducible, weakly irreducible) if and only if $\A+\I$ is strongly primitive by \cite[Lemma 1]{LiYY}
(respectively, primitive by \cite[Corollary 3.8]{CPZ3}, \cite[Theorem 6.1]{YY2} and \cite[Theorem 2.4]{Pea2}, weakly primitive by definition).

It is easily seen that $\A=D^{-(m-1)}\A D$ if and only if  $\A+\I=D^{-(m-1)}(\A+\I) D$ for a tensor $\A$ of order $m$.
So $\Dg^{(0)}(\A)=\Dg^{(0)}(\A+\I)$, and $s(\A)=s(\A+\I)$.
From this fact the irreducibility is enough to discuss the stabilizing index of nonnegative tensors, as the primitivity seems give us no more benefit.



\begin{thm} \cite{FBH} \label{wsym}
Let $\A$ be a nonnegative combinatorially symmetric weakly irreducible tensor of order $m$ and $n$ dimension.
Then $s(\A)|m^{n-1}$, $s(\A)< m^{n-1}$.
%
\end{thm}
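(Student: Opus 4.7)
The plan is to embed $\Dg^{(0)}(\A)$ as a subgroup of the finite abelian group $\mu_m^{n-1}$, where $\mu_m \subset \C$ denotes the group of $m$-th roots of unity, and then exhibit one element of $\mu_m^{n-1}$ lying outside the embedded image.

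For the divisibility $s(\A)\mid m^{n-1}$, I would invoke Lemma \ref{group}(3): since $\A$ is combinatorially symmetric, every $D\in\Dg$ satisfies $D^m=\I$, so its diagonal entries are $m$-th roots of unity. Any $D\in\Dg^{(0)}$ additionally has $d_{11}=1$ by the definition of $\Dg^{(0)}$, so the assignment $D=\diag(1,d_2,\ldots,d_n)\mapsto(d_2,\ldots,d_n)$ realizes $\Dg^{(0)}$ as a subgroup of $\mu_m^{n-1}$, a group of order $m^{n-1}$. Lagrange's theorem then yields $s(\A)=|\Dg^{(0)}|\mid m^{n-1}$.

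For the strict inequality, I would produce a single witness $D_0\in\mu_m^{n-1}$ not lying in $\Dg^{(0)}$. Fix a primitive $m$-th root of unity $\omega$, pick any index $j\in\{2,\ldots,n\}$, and set $D_0=\diag(1,\ldots,1,\omega,1,\ldots,1)$ with $\omega$ in the $j$-th position. Unwinding $\A=D_0^{-(m-1)}\A D_0$ via the entry formula $(P\A Q)_{i_1\ldots i_m}=p_{i_1i_1}a_{i_1\ldots i_m}q_{i_2i_2}\cdots q_{i_mi_m}$ shows that $D_0\in\Dg^{(0)}$ would force $d_{i_1}^{m-1}=d_{i_2}\cdots d_{i_m}$ for every nonzero entry $a_{i_1 i_2\ldots i_m}$. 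By weak irreducibility, $G(\A)$ is strongly connected, so there exists an arc from $j$ to some $k\neq j$, coming from a nonzero entry $a_{j i_2\ldots i_m}$ with $k$ appearing among $i_2,\ldots,i_m$. Letting $c$ be the number of indices equal to $j$ in $(i_2,\ldots,i_m)$, we have $0\le c\le m-2$, since at least one slot is occupied by $k\neq j$. The relation then reads $\omega^{m-1}=\omega^c$, forcing $m-1\equiv c\pmod m$, which contradicts $c\le m-2$. Hence $D_0\notin\Dg^{(0)}$, so $\Dg^{(0)}\subsetneq\mu_m^{n-1}$ and $s(\A)<m^{n-1}$.

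The main obstacle is in locating the witness $D_0$; once that is done, everything else reduces to Lagrange's theorem plus a direct calculation with the entry formula. The crucial point is that strong connectedness of $G(\A)$ supplies, for every coordinate $j\ge 2$, a nonzero entry whose index multiset is not the constant $(j,\ldots,j)$, which is precisely what prevents the single-coordinate character $d_j\mapsto\omega$ from extending to an element of $\Dg^{(0)}$.
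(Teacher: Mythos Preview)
The paper does not supply its own proof of Theorem~\ref{wsym}; the result is quoted from \cite{FBH}, so there is nothing in this manuscript to compare against line by line. That said, your argument is sound and self-contained.

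For the divisibility, your embedding of $\Dg^{(0)}(\A)$ into $\mu_m^{n-1}$ via Lemma~\ref{group}(3) and Lagrange is exactly the natural move. For the strict inequality, your witness $D_0=\diag(1,\ldots,1,\omega,1,\ldots,1)$ works: strong connectedness of $G(\A)$ guarantees a nonzero entry $a_{j i_2\ldots i_m}$ in which some $i_t\neq j$, so the count $c$ of $j$'s among $i_2,\ldots,i_m$ satisfies $0\le c\le m-2$, and $\omega^{m-1}=\omega^{c}$ is impossible for a primitive $m$-th root. (You are tacitly assuming $n\ge 2$; for $n=1$ the strict inequality $s(\A)<m^{0}=1$ is false, so this hypothesis is needed and should be made explicit.)

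As for comparison with \cite{FBH}: the introduction of the present paper indicates that \cite{FBH} computes $|\PV_{\rho(\A)}|$ exactly via the Smith normal form of the incidence matrix of $\A$. That route identifies $\Dg^{(0)}$ with the kernel of a $\Z_m$-linear map determined by the incidence matrix, and the bounds drop out of the structure of the invariant factors. Your argument is more elementary and gives the two stated bounds directly without the Smith normal form machinery; the incidence-matrix approach buys a precise formula for $s(\A)$, not just the divisibility and strict inequality.
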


\begin{thm}\cite{LiYY} \label{LYY}
Let $\A$ be a nonnegative irreducible tensor of order $m$ and dimension $n$, let
$G$ be the set of eigenvectors corresponding to $\rho(\A)$. If $\x \in G$, then
$$\x^{[m-1]^r}=e^{\i\theta}|\x|^{[m-1]^r},$$
where $\theta$ relies on $x$, and $r$ is the number of irreducible blocks of the majorization matrix $M(\A)$.
In particular, if $\A$ is also strongly irreducible, then
$$\x^{[m-1]}=e^{\i\theta}|\x|^{[m-1]}.$$
\end{thm}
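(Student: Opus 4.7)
The plan is to use Theorem~\ref{PF2}(2) to encode the eigenvector condition as a diagonal similarity of $\A$, and then exploit the resulting multiplicative identities on the entries of $D_\x$ together with the block structure of $M(\A)$ in an iterative fashion.

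First, since $\x$ is an eigenvector of $\A$ for the real eigenvalue $\rho(\A)=\rho(\A)e^{\i\cdot 0}$, applying Theorem~\ref{PF2}(2) with $\B=\A$ and $\theta=0$ yields
\[
\A = D_\x^{-(m-1)}\A\, D_\x,\qquad D_\x=\diag\left(\frac{x_1}{|x_1|},\ldots,\frac{x_n}{|x_n|}\right).
\]
Writing $d_i := x_i/|x_i|$, this unpacks entry-wise into the system of identities
\[
d_{i_1}^{\,m-1}=d_{i_2}d_{i_3}\cdots d_{i_m}\quad\text{whenever}\ a_{i_1 i_2\cdots i_m}\ne 0.
\]
Specializing to $i_2=\cdots=i_m=j$ gives $d_i^{\,m-1}=d_j^{\,m-1}$ whenever $M(\A)_{ij}\ne 0$. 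Any two indices in the same irreducible block of $M(\A)$ are connected by a directed path of such edges, and chaining the corresponding equalities shows that $d_i^{\,m-1}$ is constant on each irreducible block $I_k$; write $e_k$ for its value on $I_k$. When $r=1$ (which covers the strongly irreducible case, since $I(\A)$ is irreducible if and only if $M(\A)$ is, by \cite[Lemma 1]{LiYY}), this already gives $\x^{[m-1]}=e^{\i\theta}|\x|^{[m-1]}$ with $e^{\i\theta}=e_1$, settling the second assertion.

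For $r\geq 2$, I would induct on $r$. Raising the displayed identity to the $(m-1)$-st power and substituting $d_i^{\,m-1}=e_{k(i)}$ produces a block-level identity
\[
e_{k(i_1)}^{\,m-1}=e_{k(i_2)}\cdots e_{k(i_m)}\quad\text{for every nonzero } a_{i_1 i_2\cdots i_m},
\]
formally of the same shape as the original system but now supported on the $r$ blocks. The inductive step consists of showing that this block system is governed by a tensor-irreducible ``block tensor'' of dimension $r$ whose majorization matrix has strictly fewer than $r$ irreducible blocks; the inductive hypothesis then gives $e_k^{(m-1)^{r-1}}$ independent of $k$, and hence $d_i^{(m-1)^r}=e_{k(i)}^{(m-1)^{r-1}}$ independent of $i$. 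Since $|d_i|=1$, this common value equals $e^{\i\theta}$ for some $\theta=\theta(\x)$, which is precisely $\x^{[(m-1)^r]}=e^{\i\theta}|\x|^{[(m-1)^r]}$.

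The main obstacle I expect is verifying that the block tensor inherits tensor-irreducibility from $\A$ while strictly reducing the irreducible-block count of its majorization matrix. This is a combinatorial statement about the interaction between the Frobenius normal form of $M(\A)$ and the distribution of $\A$'s nonzero entries across these blocks; it is exactly where tensor-irreducibility of $\A$ must be exploited beyond mere matrix-irreducibility of $M(\A)$, since matrix-irreducibility alone would force $r=1$ and render the iteration vacuous.
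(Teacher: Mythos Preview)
The paper does not prove Theorem~\ref{LYY} directly; it is quoted from \cite{LiYY}. What the paper does prove is the refinement Theorem~\ref{gener}, and that argument is the relevant comparison point.

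Your opening moves coincide with the paper's: apply Theorem~\ref{PF2}(2) to obtain $\A=D_\x^{-(m-1)}\A D_\x$, unpack this as $d_{i_1}^{\,m-1}=d_{i_2}\cdots d_{i_m}$ on nonzero entries, and specialize to majorization entries to get constancy of $d_i^{\,m-1}$ on each block. The divergence is in the inductive step, and there your proposed mechanism fails outright, not merely technically.

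If $\B$ is the block tensor of dimension $r$ obtained by collapsing each irreducible block $I_k$ of $M(\A)$ to a single index, then $M(\B)_{kl}\ne 0$ precisely when there is an $M(\A)$-edge from $I_k$ to $I_l$; that is, the digraph of $M(\B)$ is the condensation of the digraph of $M(\A)$. A condensation is always acyclic, so its strongly connected components are the $r$ singletons. Hence $M(\B)$ has exactly $r$ irreducible blocks, not fewer, and your induction stalls on the first pass. (Tensor-irreducibility of $\B$ does follow from that of $\A$, but this buys nothing once the block count refuses to drop.)

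The paper's proof of Theorem~\ref{gener} avoids this by not forming a block tensor at all. Instead it uses irreducibility of $\A$ to order the components $C_1,\ldots,C_k$ so that for each $t$ there is a nonzero entry $a_{p\,i_2\ldots i_m}$ with $p\in C_{t+1}$ and all of $i_2,\ldots,i_m$ lying in $C_1\cup\cdots\cup C_t$. This single linking entry, combined with the inductive hypothesis $D_{C_1\cup\cdots\cup C_t}^{(m-1)^t}=e^{\i\tau}\I$, gives $(m-1)^{t+1}\theta_p\equiv (m-1)\tau$; since $(m-1)\theta_p$ is already the common value $\omega_{t+1}$ on $C_{t+1}$, one additional power of $m-1$ absorbs $C_{t+1}$. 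The essential difference is that irreducibility is invoked to produce one concrete entry linking the new component to the old ones at each step, rather than to manufacture a smaller tensor on which to recurse.
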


If using our language, Theorem \ref{LYY} says for any $D \in  \Dg^{(0)}(\A)$, $D^{[m-1]^r}=\I$ by normalizing $x_1=1$.
Next we will give a more refining result on the power of $\x$ in  Theorem \ref{LYY} by using graph language.

Let $\A$ be a nonnegative irreducible tensor of order $m$ and dimension $n$.
Considering the directed graph $G(\A)$ defined in Section 3.1, which is strongly connected as $\A$ is also weakly irreducible.
If an arc $(i,j)$ of $G(\A)$ arise from the nonzero entry $a_{ij\ldots j}$, it is called a \emph{solid arc}; otherwise,  it is called a \emph{dotted arc}.
The \emph{solid graph} associated with $\A$ is defined to be subgraph of $G(\A)$ induced by the solid arcs, denoted by $G^s(\A)$.
As $\A$ is irreducible, for every $j \in [n]$, there is a solid arc point to $j$, i.e. an arc of form $(i,j)$ for some $i \ne j$.
So, $G^s(\A)$ has vertex set $[n]$.
If we define a directed graph $G(M(\A))$ based on the majorization matrix $M(\A)$ similarly, that is, $(i,j)$ is an arc of $G(M(\A))$ if and only if $M(\A)_{ij} \ne 0$ or equivalently
$a_{ij\ldots j}\ne 0$, then $G(M(\A))=G^s(\A)$.
The number of irreducible blocks of $M(\A)$ is exactly the number of the strongly connected components of $G^s(\A)$.

If we think of $G^s(\A)$ as an undirected graph by ignoring the direction of arcs, then a connected component of such undirected graph is called a \emph{weakly connected component} of $G^s(\A)$.
If $G^s(\A)$ contains only one weakly connected component, then it is called \emph{weakly connected}.
Obviously, the number of weakly connected components of $G^s(\A)$ is not larger than that of the strongly connected components of $G^s(\A)$ or that of the irreducible blocks of $M(\A)$.

Suppose $G^s(\A)$ has $k$ weakly connected components, say $C_1,\ldots,C_k\;(k \ge 1)$.
Returning to the original graph $G(\A)$, there exist only dotted arcs between any $C_i$ and $C_j$ for $i \ne j$.


\begin{thm}\label{gener}
Let $\A$ be a nonnegative irreducible tensor of order $m$ and dimension $n$,  let
$\x$ be an eigenvector of $\A$ corresponding to $\rho(\A)$ and $D_\x$ be defined as in (\ref{Dy}).
Then
$$\x^{[m-1]^{r}}=e^{\i\theta}|\x|^{[m-1]^{r}}, D_\x^{(m-1)^{r}}=e^{\i \theta}\I$$
where $\theta$ relies on $x$, and $r$ is the number of weakly connected components of the solid graph $G^s(\A)$.
In particular, if $G^s(\A)$ is weakly connected, then
$$\x^{[m-1]}=e^{\i\theta}|\x|^{[m-1]}, D_\x^{(m-1)}=e^{\i \theta}\I.$$
\end{thm}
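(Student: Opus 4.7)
The plan is to combine the $\Dg^{(0)}$ machinery from Lemma \ref{group} with an inductive, combinatorial reduction modeled on collapsing weakly connected components of $G^s(\A)$ pairwise.

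First, applying Theorem \ref{PF2}(2) with $\B=\A$ and $\la=\rho(\A)$ (so $\theta=0$) gives $\A=D_\x^{-(m-1)}\A D_\x$, i.e.\ $D_\x\in\Dg^{(0)}(\A)$; equivalently, $d_i^{m-1}=d_{i_2}\cdots d_{i_m}$ for every nonzero $a_{ii_2\ldots i_m}$, where $d_i$ is the $i$-th diagonal entry of $D_\x$. Specialising to a solid arc ($a_{ij\ldots j}\ne 0$) yields $d_i^{m-1}=d_j^{m-1}$, so $d_i^{m-1}$ depends only on the weakly connected component $C_k$ of $G^s(\A)$ containing $i$; call this common value $c_k$, with $|c_k|=1$. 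The case $r=1$ is then immediate: $D_\x^{m-1}=c_1\I=e^{\i\theta}\I$, and multiplying through by $|\x|^{[m-1]}$ gives $\x^{[m-1]}=e^{\i\theta}|\x|^{[m-1]}$. For $r\ge 2$, raising the entrywise relation to the $(m-1)$-st power rewrites it as $c_{k_0}^{m-1}=\prod_{\ell=1}^{r}c_\ell^{n_\ell}$ with $\sum_\ell n_\ell=m-1$, where $k_0$ is the component of $i$ and $n_\ell$ is the number of $i_j$'s in $C_\ell$. Irreducibility of $\A$ now enters combinatorially: for every nonempty proper $S\subsetneq[r]$, applying the defining condition of irreducibility to the proper subset $I=\bigcup_{k\in S}C_k$ of $[n]$ produces a nonzero entry for which the above relation collapses to $c_{k_0}^{m-1}=\prod_{\ell\notin S}c_\ell^{n_\ell}$ with $k_0\in S$ and $\sum_{\ell\notin S}n_\ell=m-1$.

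Everything then reduces to the algebraic claim, proved by induction on $r$: any $(c_1,\ldots,c_r)\in(\C^*)^r$ satisfying such a relation for every nonempty proper $S\subsetneq[r]$ has $c_k^{(m-1)^{r-1}}$ independent of $k$. In the inductive step, the hypothesis at $S=[r-1]$ gives, after relabelling, $c_1^{m-1}=c_r^{m-1}$. Setting $\beta_k:=c_k^{m-1}$ we obtain $\beta_1=\beta_r$, so merging $1$ and $r$ via the surjection $\phi:[r]\to[r-1]$ with $\phi(1)=\phi(r)=1$ and $\phi(k)=k$ for $2\le k\le r-1$, the assignment $\gamma_{\phi(k)}:=\beta_k$ is well-defined. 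For each nonempty proper $S'\subsetneq[r-1]$, the relation on the $\gamma$'s is obtained by pushing forward, via $\phi$, the hypothesis for the original system at $S=\phi^{-1}(S')$ raised to the $(m-1)$-st power: the source $k_0\in S$ becomes $\phi(k_0)\in S'$; the exponents supported on $\ell\notin S$ map to exponents supported on $\ell'\notin S'$ (since $\phi^{-1}(S')=S$); and the exponent sum $m-1$ is preserved. Induction yields $\gamma_{k'}^{(m-1)^{r-2}}$ independent of $k'$, equivalently $c_k^{(m-1)^{r-1}}=\beta_k^{(m-1)^{r-2}}$ independent of $k$. Translating back to $D_\x$ gives $D_\x^{(m-1)^r}=e^{\i\theta}\I$, and the formula for $\x$ follows as in the $r=1$ case.

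The main obstacle is the bookkeeping in the inductive step: verifying that the pushforward under $\phi$ preserves the defining structure of the hypothesis (source index in $S'$, right-hand-side exponents supported off $S'$, exponent sum equal to $m-1$), so that the reduced $(r-1)$-variable system inherits a hypothesis of the same form. Once this combinatorial check is in place, the induction runs automatically and both formulas of the theorem drop out.
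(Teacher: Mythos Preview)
Your argument is correct and is essentially the paper's proof: both start from Theorem \ref{PF2}(2) to obtain the entrywise relation $d_{i_1}^{m-1}=d_{i_2}\cdots d_{i_m}$ whenever $a_{i_1\ldots i_m}\ne 0$, specialize to solid arcs to make $d_i^{m-1}$ constant on each weakly connected component of $G^s(\A)$, and then combine irreducibility with an induction on the number $r$ of components to force these constants to agree after raising to the $(m-1)^{r-1}$-st power. The only difference is organizational---the paper first labels the components $C_1,\ldots,C_r$ so that each $C_{i+1}$ admits a nonzero entry with first index in $C_{i+1}$ and all remaining indices in $\cup_{j\le i}C_j$, and then adjoins one component per inductive step, whereas you invoke irreducibility for every proper $S\subsetneq[r]$ up front and instead merge two components per step via your map $\phi$; the underlying mechanism is the same.
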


\begin{proof}
Let $D=D_\x:=\diag(e^{\i \theta_1}, \ldots, e^{\i \theta_n})$.
By Theorem \ref{PF2}(2), $\A=D^{-(m-1)}\A D$, or equivalently
for any $i_1,\ldots,i_m \in [n]$,
$$ a_{i_1 \ldots i_m}=e^{\i (-(m-1)\theta_{i_1})} a_{i_1 \ldots i_m} e^{\i \theta_{i_2}} \cdots e^{\i \theta_{i_m}}.$$
Then, if $ a_{i_1 \ldots i_m} \ne 0$, we have
\begin{equation}\label{similar}
(m-1)\theta_{i_1} \equiv ( \theta_{i_2}+\cdots+\theta_{i_m}) \mod 2 \pi.
\end{equation}

First suppose that the solid graph $G^s(\A)$ has only one weakly connected component, i.e. it is weakly connected.
By (\ref{similar}), for an arc $(i,j)$ or $(j,i)$ of $G^s(\A)$, we get
$$(m-1) \theta_i \equiv (m-1) \theta_j \mod 2 \pi.$$
As $G^s(\A)$ is weakly connected, for any two vertices there exists a sequences of arcs connecting them by ignoring the direction of the arcs.
So, we have
$$\theta:=(m-1) \theta_1  \equiv \cdots \equiv (m-1)\theta_n \mod 2\pi,$$
which yields $D^{m-1}=e^{\i \theta}\I$, and hence $\x^{[m-1]}=e^{\i\theta}|\x|^{[m-1]}$.

Next suppose that $G^s(\A)$ has $k\;(k \ge 2)$ weakly connected components.
We label the  weakly connected components of $G^s(\A)$ as $C_1,\ldots,C_k$ under the following procedure:
\begin{enumerate}
\item  $C_1$ is any weakly connected component.
\item Suppose that $C_i$ is labelled  for $i \ge 1$ and $\cup_{j\le i} C_j \subsetneq [n]$ , there exists a vertex $p$ outside $\cup_{j\le i} C_j$ such that there are $m-1$ dotted arcs arising from a nonzero
  entry $a_{pi_2\ldots i_m}\ne 0$, where $i_2,\ldots,i_m \in \cup_{j \le i} C_j$.
Then $C_{i+1}$ is any weakly connected component containing $p$.
\end{enumerate}
The labelling process is available as $\A$ is irreducible and if taking $I^c=\cup_{j\le i} C_j$
there must exists an element $p \in I$ such that $a_{pi_2\ldots i_m}\ne 0$, where $i_2,\ldots,i_m \in I^c$.

Let $D_{C_i}$ be principal submatrix of $D$ indexed by the vertices of $C_i$ for $i=1,\ldots,k$.
By what have proved,
\begin{equation} \label{similar2} D_{C_i}^{m-1}=e^{\i \omega_i}\I \hbox{~for some~} \omega_i \hbox{~and for~} i=1,\ldots,k.\end{equation}

If $r=2$, then there exist $m-1$ dotted arcs of $G(\A)$ arising from
a nonzero entry $a_{i_1 i_2\ldots i_m}$, where $i_1$ is a vertex of $C_2$, and $i_2,\ldots,i_m$ are vertices of $C_1$ which are not all same.
So, by (\ref{similar}) and (\ref{similar2}), we have
$$ \theta:=(m-1) \omega_2 \equiv (m-1)^2\theta_{i_1} \equiv (m-1)\theta_{i_2}+\cdots+(m-1)\theta_{i_m} \equiv (m-1)\omega_1 \mod 2 \pi,$$
which implies that $D^{(m-1)^2}=e^{\i \theta}\I$.

Assume that the result holds for $k=t\;(t \ge 1)$.
When $k=t+1$, let $G_1$ be the subgraph of $G(\A)$ induced by the vertices of $C_1,\ldots, C_t$.
By the assumption, we have
\begin{equation} \label{similar3} D_{G_1}^{(m-1)^t}=e^{\i \tau} \I \hbox{~for some ~} \tau.\end{equation}
There exist $m-1$ dotted arcs of $G(\A)$ arising from
a nonzero entry $a_{i_1 i_2\ldots i_m}$, where $i_1 \in C_{t+1}$, and $i_2,\ldots,i_m \in \cup_{j \le i}C_j$ which are not all same.
Similar to the above discussion, we have
\begin{eqnarray*}
\theta &:=& (m-1)^t \omega_{t+1}\equiv (m-1)^{t+1} \theta_{i_1} \\
& \equiv & (m-1)^{t}\theta_{i_2} + \cdots + (m-1)^{t} \theta_{i_m}\\
& \equiv & (m-1) \tau \mod 2 \pi,
\end{eqnarray*}
which implies that $D^{(m-1)^{t+1}}=e^{\i \theta}\I$.
\end{proof}

\begin{cor}
Let $\A$ be a nonnegative irreducible tensor of order $m$ and dimension $n$.
Then for any $D \in \Dg^{(0)}(\A)$, $D^{(m-1)^r}=I$, where $r$ is the number of weakly connected components of $G^s(\A)$; and
$s(\A) \le (m-1)^{r(n-1)}$.
\end{cor}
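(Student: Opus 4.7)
The plan is to deduce both assertions from Theorem~\ref{gener} together with the coset description in Lemma~\ref{group}(2). I would first exploit the identification of $\Dg^{(0)}(\A)$ with a subset of $\PV_{\rho(\A)}$ normalized by $x_1 = 1$: by (\ref{2nd}), each $D \in \Dg^{(0)}(\A)$ equals $D_\x = \diag(x_1/|x_1|,\ldots,x_n/|x_n|)$ for some eigenvector $\x$ of $\A$ corresponding to $\rho(\A)$ with $x_1 = 1$; in particular the $(1,1)$-entry of $D$ is $1$, which is also directly forced by the condition $d_{11}=1$ appearing in the definition (\ref{Dg}) of $\Dg^{(0)}$.

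Next, I would apply Theorem~\ref{gener} to this $\x$: it yields $D_\x^{(m-1)^r} = e^{\i\theta}\I$ for some $\theta$ depending on $\x$. Reading off the $(1,1)$-entry on both sides gives $1 = 1^{(m-1)^r} = e^{\i\theta}$, so $\theta \equiv 0 \pmod{2\pi}$. Hence $D^{(m-1)^r} = \I$, which is the first claim of the corollary.

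For the bound, a direct counting argument suffices. Every $D \in \Dg^{(0)}(\A)$ has the shape $D = \diag(1, e^{\i\theta_2}, \ldots, e^{\i\theta_n})$, and the identity $D^{(m-1)^r}=\I$ forces each $e^{\i\theta_j}$ for $j=2,\ldots,n$ to be an $(m-1)^r$-th root of unity. This gives at most $(m-1)^{r(n-1)}$ possibilities, so by Definition~\ref{stab} we obtain $s(\A) = |\Dg^{(0)}(\A)| \le (m-1)^{r(n-1)}$.

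There is no serious obstacle here, since Theorem~\ref{gener} already carries out the algebraic heart of the argument. The only delicate point is verifying that the normalization $d_{11}=1$ pins down the phase $e^{\i\theta}$ appearing in Theorem~\ref{gener} to be exactly $1$ (rather than merely an $(m-1)^r$-th root of unity); that is what upgrades the conclusion from $D^{(m-1)^r}=e^{\i\theta}\I$ to $D^{(m-1)^r}=\I$, and what makes the counting estimate above tight enough to give $(m-1)^{r(n-1)}$ rather than a weaker bound.
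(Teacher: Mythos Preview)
Your proposal is correct and follows essentially the same approach as the paper: invoke Lemma~\ref{group}(2) to identify $D\in\Dg^{(0)}(\A)$ with $D_\x$ for some $\x$ normalized by $x_1=1$, apply Theorem~\ref{gener} to get $D^{(m-1)^r}=e^{\i\theta}\I$, use the normalization to force $e^{\i\theta}=1$, and then count the $(m-1)^r$-th roots of unity in the remaining $n-1$ diagonal slots. Your exposition is in fact slightly more explicit than the paper's about why the phase $e^{\i\theta}$ is pinned down to $1$.
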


\begin{proof}
By Lemma \ref{group}(2), for any $D\in \Dg^{(0)}(\A)$, $D=D_\x$ for some $\x \in \PV_{\rho(\A)}$ with $\x_1=1$.
So, in Theorem \ref{gener}, if normalizing $\x$ so that $x_1=1$, then $\D_\x^{(m-1)^r}=\I$.
As $D^{(m-1)^r}=\I$, we can write
$$D=\hbox{diag}\left(1,e^{\i \frac{2 \pi t_1}{(m-1)^r}}, \ldots, e^{\i \frac{2 \pi t_{n-1}}{(m-1)^r}}\right)$$
for some integers $t_1,\ldots,t_{n-1} \in [(m-1)^r]$.
The number of choices of such $D$ is at most $(m-1)^{r(n-1)}$. The result follows.
\end{proof}

\begin{cor}
Let $\A$ be a nonnegative combinatorially symmetric irreducible tensor of order $m$ and dimension $n$.
Then $\A$ has only one eigenvector corresponding to $\rho(\A)$, i.e. the Perron vector up to scalar, or equivalently $s(\A)=1$.
\end{cor}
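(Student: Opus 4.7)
The plan is to combine the two constraints already available on any $D\in\Dg^{(0)}(\A)$ and exploit the coprimality of $m$ and $m-1$. Concretely, the combinatorial symmetry hypothesis and the irreducibility hypothesis each force $D$ to have a certain finite order; because those orders are coprime, $D$ must be the identity.

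First I would start from an arbitrary $D\in\Dg^{(0)}(\A)$, normalized so that $d_{11}=1$. Since $\A$ is combinatorially symmetric, Lemma \ref{group}(3) applies and gives
\begin{equation*}
D^{m}=\I.
\end{equation*}
On the other hand, by Lemma \ref{group}(2) we can write $D=D_\x$ for some $\x\in\PV_{\rho(\A)}(\A)$ with $x_1=1$, so the irreducibility hypothesis lets us invoke Theorem \ref{gener}: letting $r$ denote the number of weakly connected components of the solid graph $G^s(\A)$, we obtain $D^{(m-1)^r}=e^{\i\theta}\I$ for some real $\theta$. Reading off the $(1,1)$-entry and using $d_{11}=1$ forces $e^{\i\theta}=1$, hence
\begin{equation*}
D^{(m-1)^r}=\I.
\end{equation*}

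The key arithmetic step is then immediate: $\gcd(m,m-1)=1$ implies $\gcd(m,(m-1)^r)=1$, so there exist integers $a,b$ with $am+b(m-1)^r=1$. Therefore
\begin{equation*}
D=D^{am+b(m-1)^r}=(D^{m})^a\cdot(D^{(m-1)^r})^b=\I.
\end{equation*}
Since $D\in\Dg^{(0)}(\A)$ was arbitrary (up to the standing normalization $d_{11}=1$ used in the definition of $\Dg^{(0)}$), we conclude $\Dg^{(0)}(\A)=\{\I\}$, and so $s(\A)=|\Dg^{(0)}(\A)|=1$. Via the bijection in Lemma \ref{group}(2), this translates to $|\PV_{\rho(\A)}(\A)|=1$, i.e. the Perron vector is the only eigenvector associated with $\rho(\A)$ up to a scalar.

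I do not foresee a genuine obstacle here: the whole proof is really a two‑line synthesis of Lemma \ref{group}(3) (a purely combinatorial consequence of symmetry) with Theorem \ref{gener} (the irreducibility input), glued together by $\gcd(m,m-1)=1$. The only mild point to keep in mind is that Theorem \ref{gener} is stated for $\x\in\PV_{\rho(\A)}(\A)$ rather than for an abstract $D\in\Dg^{(0)}(\A)$, but Lemma \ref{group}(2) exactly bridges that gap; and one should note that the integer $r$ does not need to be controlled — any finite value works, since coprimality with $m$ is preserved under powers of $m-1$.
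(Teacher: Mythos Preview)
Your proof is correct and follows essentially the same approach as the paper: both combine $D^{m}=\I$ from Lemma~\ref{group}(3) with $D^{(m-1)^r}=\I$ from Theorem~\ref{gener} (after the normalization $d_{11}=1$) and conclude $D=\I$ via $\gcd(m,m-1)=1$. Your write-up is in fact slightly more careful than the paper's, since you make explicit the use of Lemma~\ref{group}(2) to pass from an abstract $D\in\Dg^{(0)}(\A)$ to some $D_\x$, and you spell out why the $e^{\i\theta}$ factor disappears.
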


\begin{proof}
By Lemma \ref{group}(2), $|\PV_{\rho(\A)}|=|\Dg^{(0)}(\A)|$.
For any $D \in \Dg^{(0)}(\A)$, $D^m=\I$ by Lemma \ref{group}(3),
and $D^{(m-1)^r}=\I$ for some positive integer $r$ by Theorem \ref{gener} .
As $\hbox{g.c.d}(m,m-1)=1$, $D=\I$. The result follows.
\end{proof}

\section{Projective eigenvariety of combinatorially symmetric tensors}

Let $\A$ be a nonnegative tensor of order $m$ and dimension $n$.
Consider the directed graph $G=G(\A)$ associated with $\A$.
For $i,j \in [n]$, we say that \emph{$i$ has an access to $j$} if there is a directed path from $i$ to $j$,
and that $i$ and $j$ \emph{communicate} if $i$ has an access to $j$ and $j$ has an access to $i$.
So we have equivalent classes on $[n]$ of the communication relation induced by $G$.
Each class $\alpha$ corresponds to a strongly connected component $G[\alpha]$ induced by $\alpha$, and vice versa.
A class $\alpha$ \emph{has an access to a class $\beta$} if for $i\in \alpha$ and $j \in \beta$, $i$ has an access to $j$.
A class is called \emph{final} if it has no access to any other class.

For the graph $G$, we can associated with a square matrix $A(G)=(a_{ij})$ indexed by $[n]$ such that $a_{ij}=1$ if $(i,j)$ is an arc of $G$, and $a_{ij}=0$ otherwise.
Suppose $[n]$ has $s$ classes, say $\alpha_1,\ldots,\alpha_s$, which are labelled such that
for each $i=1,\ldots,s$, $\alpha_i$ is the final class in the subgraph $G[\cup_{j \le i} \alpha_j]$.
So, by a suitable permutation $P$,  $P^TA(G)P$ has the following form:
\begin{equation}\label{matrixform}
P^TA(G)P=\left[
\begin{array}{cccc}
A_{11} & A_{12} & \cdots & A_{1s}\\
0 & A_{22} & \cdots & A_{2s}\\
\vdots  & \vdots & \ddots & \vdots\\
0 &0 & \cdots & A_{ss}
\end{array}
\right],
\end{equation}
where $A_{ij}=:A(G)[\alpha_i|\alpha_j]$, the submatrix of $A(G)$ with rows indexed by $\alpha_i$ and columns indexed by $\alpha_j$;
in particular $A[\alpha_i]:=A[\alpha_i|\alpha_i]=A(G[\alpha_i])$, which is an irreducible block corresponding to the strongly connected components $G[\alpha_i]$.

Now returning to the tensor $\A$.
We also get the subtensors $\A[\alpha_{i_1}|\alpha_{i_2}|\cdots|\alpha_{i_m}]$, whose entries are $a_{i_1i_2\cdots i_m}$ with $i_j \in \alpha_j$ for $j \in [m]$.
For brevity, if $\alpha_{i_j}=:\alpha$ for each $j \in [m]$, then we denote the above subtensor as $\A[\alpha]$, called the \emph{principal subtensor} of $\A$.
By (\ref{matrixform}), we get the structure of $\A$, that is,
\begin{equation}\label{tensorform}
\A[\alpha_{i_1}|\alpha_{i_2}|\cdots|\alpha_{i_m}] \ne 0 \hbox{~only when~} i_1 \le i_j \hbox{~for~} j=2,\ldots,m \hbox{~and for~} i_1 \in [s].
\end{equation}
Note that $\A[\alpha_i]$ may not be weakly irreducible.
For example, let $\A=(a_{ijk})$ be a tensor of order $3$ and dimension $4$ whose nonzero entries are given as follows:
$$ a_{111}=a_{124}=a_{234}=a_{341}=1.$$
Then we have two classes: $\alpha_1=\{1,2,3\}$ and $\alpha_2=\{4\}$.
It is easy to see that $\A[\alpha_1]$ and $\A[\alpha_2]$ are both weakly reducible.

\begin{lem}\label{reddsp}
Let $\A$ be a nonnegative tensor with structure (\ref{tensorform}).
Then $\rho(\A[\alpha_i]) \le \rho(\A)$ for $i=1,\ldots,s$.
If $\x$ is an eigenvector of $\A$ corresponding to $\rho(\A)$, then there exists at least one $\alpha_i$ such that $\rho(\A[\alpha_i]) = \rho(\A)$ and $\x[\alpha_i] \ne 0$.
\end{lem}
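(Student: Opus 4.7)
My proof strategy splits the lemma into its two assertions, treating each in turn.

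For the first inequality $\rho(\A[\alpha_i]) \le \rho(\A)$, I plan to reduce to Theorem \ref{PF2}(1) via a zero-padding trick. I define a tensor $\B$ of the same order $m$ and dimension $n$ as $\A$ by setting $b_{i_1 i_2 \ldots i_m} = a_{i_1 i_2 \ldots i_m}$ when $i_1, i_2, \ldots, i_m \in \alpha_i$ and $b_{i_1 i_2 \ldots i_m} = 0$ otherwise, so that $0 \le \B \le \A$ and hence $|\B| \le \A$. Taking the Perron eigenvector $\y \ge 0$ of $\A[\alpha_i]$ from Theorem \ref{PF1}(1) and extending it by zero outside $\alpha_i$ produces a nonzero vector $\tilde{\y} \in \R^n$ which, by direct expansion against the support of $\B$, satisfies $\B \tilde{\y}^{m-1} = \rho(\A[\alpha_i]) \tilde{\y}^{[m-1]}$. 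Thus $\rho(\A[\alpha_i])$ is an eigenvalue of $\B$, and Theorem \ref{PF2}(1) yields $\rho(\A[\alpha_i]) \le \rho(\B) \le \rho(\A)$.

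For the second assertion, given an eigenvector $\x$ of $\A$ for $\rho(\A)$, I would pick $i^* \in [s]$ to be the largest index with $\x[\alpha_{i^*}] \ne 0$ (which exists since $\x \ne 0$) and then show that $\y := \x[\alpha_{i^*}]$ is an eigenvector of $\A[\alpha_{i^*}]$ with eigenvalue $\rho(\A)$. The first part then forces $\rho(\A[\alpha_{i^*}]) = \rho(\A)$. To verify the claim, fix $j \in \alpha_{i^*}$ and expand $(\A \x^{m-1})_j = \rho(\A) x_j^{m-1}$. By the block-triangular rule (\ref{tensorform}), any nonzero entry $a_{j i_2 \ldots i_m}$ with $j \in \alpha_{i^*}$ forces each $i_l$ to lie in a class $\alpha_{k_l}$ with $k_l \ge i^*$. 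The maximality of $i^*$ then kills every term in which some $k_l > i^*$ since $x_{i_l} = 0$ in that case, leaving only terms with $i_2, \ldots, i_m \in \alpha_{i^*}$; so the eigenequation at $j$ collapses to $(\A[\alpha_{i^*}] \y^{m-1})_j = \rho(\A) y_j^{m-1}$.

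The main subtlety is the combinatorial bookkeeping in the second part: one must see that the block-triangular constraint from (\ref{tensorform}), combined with the maximality of $i^*$, collapses the eigenequation of $\A$ at indices $j \in \alpha_{i^*}$ onto the eigenequation of the diagonal block $\A[\alpha_{i^*}]$. Once this is done the argument is short. I favour the explicit zero-padding in part one over an appeal to a general Collatz--Wielandt characterization, because $\A$ here is only assumed nonnegative (not weakly irreducible), and I want to rely solely on Theorem \ref{PF2}(1), which is already invoked freely in the paper.
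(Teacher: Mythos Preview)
Your proposal is correct and, for the second assertion, matches the paper's argument exactly: both pick the largest index $i^*$ (the paper calls it $t$, splitting off the case $t=s$ separately) with $\x[\alpha_{i^*}]\ne 0$ and use the block-triangular structure (\ref{tensorform}) together with maximality to collapse the eigenequation onto the diagonal block.

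The only genuine difference is in the first assertion. The paper does not go through an auxiliary tensor $\B$ and Theorem~\ref{PF2}(1); instead it extends the Perron vector of $\A[\alpha_i]$ by zeros to $\bar{\y}$ and plugs it directly into the Collatz--Wielandt formula $\rho(\A)=\max_{\x\gneq 0}\min_{x_i>0}(\A\x^{m-1})_i/x_i^{m-1}$ from \cite[Theorem~5.3]{YY1}, which holds for arbitrary nonnegative tensors. Your route via $0\le \B\le \A$ and $\rho(\B)\le\rho(\A)$ is equally valid and arguably cleaner in that it stays within results already quoted in the paper, whereas the paper's route avoids introducing the padded tensor $\B$ at the cost of citing one more external result. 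Both arguments are short and neither offers a substantive advantage.
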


\begin{proof}
By Theorem \ref{PF1}(1), there is a nonnegative eigenvector $\y$ of $\A[\alpha_i]$ corresponding to the spectral radius $\rho(\A[\alpha_i])$,
i.e. $\A[\alpha_i]\y^{m-1}=\rho(\A[\alpha_i]) \y^{[m-1]}$.
Now define a nonnegative vector $\bar{\y} \in \mathbb{R}^n$ such that $\bar{\y}[\alpha_i]=\y$ and $\bar{\y}[\alpha_i^c]=0$.
By \cite[Theorem 5.3]{YY1},
$$\rho(\A)=\max_{\x \gneq 0, \x \in \mathbb{R}^n} \min_{x_i>0} \frac{(\A \x^{m-1})_i}{x_i^{m-1}}
 \ge \min_{\bar{y}_i>0} \frac{(\A \bar{\y}^{m-1})_i}{\bar{y}_i^{m-1}}=
\min_{y_i>0} \frac{(\A[\alpha_i] \y^{m-1})_i}{y_i^{m-1}}=\rho(\A[\alpha_i]).$$

Now suppose $\x$ is an eigenvector of $\A$ corresponding to $\rho(\A)$.
If $\x[\alpha_s]\ne 0$, then by the eigenvector equation $\A\x^{m-1}=\rho(\A) \x^{[m-1]}$,
 by (\ref{tensorform}), we get
$$  \A[\alpha_s]\x[\alpha_s]^{m-1}=\rho(\A) \x[\alpha_s]^{[m-1]}.$$
So $\rho(\A[\alpha_s])=\rho(\A)$.
Otherwise, $\x[\alpha_s]= 0$.
Let $t$ be the integer such that $x[\alpha_t]\ne 0$ and $x[\alpha_i]=0$ for $t+1 \le i \le s$.
Then $ 1 \le t \le s-1$; and also by the eigenvector equation,
$$ \A[\alpha_t]\x[\alpha_t]^{m-1}+ \sum_{t\le i_2,\ldots,i_m \le s \atop t <\max(i_2,\ldots,i_m)}
\A[\alpha_t|\alpha_{i_2}|\cdots|\alpha_{i_m}]\x[\alpha_{i_2}]\cdots \x[\alpha_{i_m}]=\rho(\A)\x[\alpha_t]^{[m-1]},$$
where, for each $i \in \alpha_t$,
 $$(\A[\alpha_t|\alpha_{i_2}|\cdots|\alpha_{i_m}]\x[\alpha_{i_2}]\cdots \x[\alpha_{i_m}])_i:=\sum_{i_j \in \alpha_j, j=2,\ldots,m}a_{ii_2\ldots, i_m}x_{i_2}\cdots x_{i_m}.$$
So, $\A[\alpha_t]\x[\alpha_t]^{m-1}=\rho(\A)\x[\alpha_t]^{[m-1]}$ and hence $\rho(\A[\alpha_t])=\rho(\A)$.
\end{proof}

Suppose that $\A$ is further combinatorially symmetric.
 Then $G(\A)$ is an undirected graph, i.e. $(i,j)$ is an arc of $G$ if and only if
$(j,i)$ is an arc of $G$.
So the matrix $A(G)$ is symmetric, and is a direct sum of irreducible diagonal blocks.
Then the structure of $\A$ is reduced to be as follows:
\begin{equation}\label{tensorform2}
\A[\alpha_{i_1}|\alpha_{i_2}|\cdots|\alpha_{i_m}] \ne 0 \hbox{~only when~} i_1=\cdots =i_m.
\end{equation}
So $\A$ is a direct sum of combinatorially symmetric tensors $A[\alpha_i]$ for $i=1,\ldots,s$.
Each $\A[\alpha_i]$ is either weakly irreducible or zero of dimension one,
because if $(p,q)$ is an arc of $G[\alpha_i]$, then $\A$ contains a nonzero entry $a_{pi_2\ldots i_m}$,
where $q \in \{i_2,\ldots, i_m\}$,
and $i_2,\ldots,i_m \in \alpha_i$ as $G[\alpha_i]$ is undirected, yielding to $G[\alpha_i]=G(\A[\alpha_i])$.
If $A[\alpha_i] \ne 0$, we call $\A[\alpha_i]$ a \emph{weakly irreducible component}.

\begin{thm}\label{main3}
Let $\A$ be a nonzero nonnegative combinatorially symmetric tensors.
Then $\PV_{\rho(\A)}$ has dimension greater than zero if and only if
$\A$ has at least two weakly irreducible components with spectral radius equal to $\rho(\A)$.
\end{thm}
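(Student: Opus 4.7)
The plan is to exploit the block-diagonal-like structure (\ref{tensorform2}) guaranteed by combinatorial symmetry. Because all inter-class entries of $\A$ vanish, the eigenequation $\A\x^{m-1}=\rho(\A)\x^{[m-1]}$ decouples block-by-block: for each class $\alpha_k$, the restriction satisfies $\A[\alpha_k]\x[\alpha_k]^{m-1}=\rho(\A)\x[\alpha_k]^{[m-1]}$. Since $\A$ is nonzero, one has $\rho(\A)>0$ and at least one weakly irreducible component exists. Consequently, if $\A[\alpha_k]$ is a zero $1$-dimensional block then $\x[\alpha_k]=0$, and if $\A[\alpha_k]$ is a weakly irreducible component with $\rho(\A[\alpha_k])<\rho(\A)$ then $\x[\alpha_k]=0$ as well (otherwise $\rho(\A)$ would be an eigenvalue of $\A[\alpha_k]$ of modulus exceeding $\rho(\A[\alpha_k])$). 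This decoupling observation is the backbone of both directions.

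For the sufficiency, I would let $\A[\alpha_i]$ and $\A[\alpha_j]$ with $i\ne j$ be two weakly irreducible components both achieving the spectral radius $\rho(\A)$, and write $\mathbf{u},\mathbf{w}\in\C^n$ for the extensions-by-zero of the Perron vectors of $\A[\alpha_i]$ and $\A[\alpha_j]$ respectively. Using the decoupling together with the $(m-1)$-homogeneity of each block equation, one checks that $\la\mathbf{u}+\mu\mathbf{w}$ satisfies the eigenequation for every $[\la:\mu]\in\mathbb{P}^1$. Distinct points of $\mathbb{P}^1$ give projectively distinct vectors (compare coordinate ratios on $\alpha_i$ versus on $\alpha_j$), so the induced regular map $\mathbb{P}^1\to\PV_{\rho(\A)}$ is injective. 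Its image is an irreducible closed subset of $\PV_{\rho(\A)}$ by Theorem \ref{close}, and it contains more than one point, so $\dim\PV_{\rho(\A)}\ge 1$.

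For the necessity, I would argue by contrapositive: assume at most one weakly irreducible component, say $\A[\alpha_{i_0}]$, has spectral radius equal to $\rho(\A)$. The decoupling then forces $\x[\alpha_k]=0$ for all $k\ne i_0$ whenever $\x\in\PV_{\rho(\A)}$, while $\x[\alpha_{i_0}]$ is an eigenvector of $\A[\alpha_{i_0}]$ associated with its own spectral radius. The restriction $[\x]\mapsto[\x[\alpha_{i_0}]]$ embeds $\PV_{\rho(\A)}(\A)$ into $\PV_{\rho(\A[\alpha_{i_0}])}(\A[\alpha_{i_0}])$; applying Theorem \ref{main} to the weakly irreducible tensor $\A[\alpha_{i_0}]$ shows the latter is finite, so $\PV_{\rho(\A)}(\A)$ is finite and has dimension zero.

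The main obstacle is conceptual rather than computational: I need to ensure the sufficiency construction produces a genuinely positive-dimensional subvariety rather than merely infinitely many isolated projective eigenvectors. Producing an explicit injective regular map from $\mathbb{P}^1$ and invoking Theorem \ref{close} to identify its image as an irreducible closed set of dimension $\ge 1$ is the cleanest way to sidestep this; the rest of the argument is bookkeeping on the decoupled structure inherited from (\ref{tensorform2}).
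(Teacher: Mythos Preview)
Your argument is correct and rests on the same structural observation as the paper: the block decomposition (\ref{tensorform2}) decouples the eigenequation, so only the weakly irreducible components with spectral radius $\rho(\A)$ can support nonzero coordinates of an eigenvector, and Theorem~\ref{main} controls each such block.

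The execution differs slightly from the paper's. For sufficiency, the paper works with the \emph{affine} eigenvariety and writes
\[
\mathcal{V}_{\rho}(\A)\cong \mathcal{V}_{\rho}(\A[\alpha_1])\oplus\cdots\oplus\mathcal{V}_{\rho}(\A[\alpha_k]),
\]
then notes that each factor has affine dimension $1$ (Theorem~\ref{main} gives finitely many projective points, i.e.\ a finite union of lines through the origin), so the product has dimension $k$ and hence $\dim\PV_{\rho}(\A)=k-1$. Your route instead embeds a concrete $\mathbb{P}^1$ via $[\lambda:\mu]\mapsto[\lambda\mathbf{u}+\mu\mathbf{w}]$ and invokes Theorem~\ref{close} to certify an irreducible closed curve inside $\PV_{\rho}(\A)$. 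The paper's approach yields the stronger conclusion $\dim\PV_{\rho}(\A)=k-1$ as a byproduct, while yours gives only $\dim\ge 1$; on the other hand, your construction is more explicit and avoids any subtleties about dimensions of products of reducible varieties. For necessity the two arguments are essentially identical. One small point: in your contrapositive you should remark (via Lemma~\ref{reddsp} and $\A\ne 0$) that \emph{exactly} one such component exists, so that $i_0$ is well defined.
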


\begin{proof}
By the previous discussion, let $\A$ be a direct sum of weakly irreducible components or zeros of dimension one, say $\A[\alpha_i]$ for $i=1,\ldots,s$ for some $s \ge 1$.
By Lemma \ref{reddsp}, we may assume $\A$ has $k$ weakly irreducible components with spectral radius equal to $\rho(\A):=\rho$,
 say $\A[\alpha_1],\ldots,\A[\alpha_k]$ for some $k \ge 1$.
Let $\x$ be an eigenvector of $\A$ corresponding to $\rho$.
Then
$$ \A[\alpha_i]\x[\alpha_i]^{m-1}=\rho \x[\alpha_i]^{[m-1]}, i=1,\ldots,s.$$
So, if $\x[\alpha_i] \ne 0$, then $\rho(\A[\alpha_i])=\rho$ by Lemma \ref{reddsp}, or equivalently
if $\rho(\A[\alpha_i])<\rho$, then $\x[\alpha_i] = 0$.
So we have the isomorphism for the affine variety $\mathcal{V}_{\rho}(\A)$:
$$\mathcal{V}_{\rho}(\A) \cong \mathcal{V}_{\rho}(\A[\alpha_1])\oplus \cdots \oplus \mathcal{V}_{\rho}(\A[\alpha_k]),$$ which has dimension $k$ as each $\mathcal{V}_{\rho}(\A[\alpha_i])$ has dimension $1$ by  Theorem \ref{main}.
Hence the projective variety $\PV_{\rho}(\A)$ has dimension $k-1$.
The result follows.
\end{proof}

\section{Application to hypergraphs}
A {\it hypergraph} $G=(V(G),E(G))$ consists of a vertex set  $V(G)=\{v_1,v_2,\ldots,v_n\}$ and an edge set $E(G)=\{e_{1},e_2,\ldots,e_{l}\}$
  where $e_{j}\subseteq V(G)$ for $j \in [l]$.
If $|e_{j}|=m$ for each $j\in [l]$, then $G$ is called an {\it $m$-uniform} hypergraph.
In particular, the $2$-uniform hypergraphs are exactly the classical simple graphs.
Two vertices $u,w$ of $G$ are said \emph{connected} if there exists a sequence of alternate vertices and edges of $G$: $v_{0},e_{1},v_{1},e_{2},\ldots,e_{l},v_{l}$,
    where $u=v_0$, $w=v_l$, and $\{v_{i},v_{i+1}\}\subseteq e_{i+1}$ for $i=0,1,\ldots,l-1$.
Under the relation of connectedness, we have equivalent classes on $V(G)$.
A \emph{connected component} of $G$ is defined to be a sub-hypergraph of $G$ induced by an equivalent class $\alpha$, denoted by $G[\alpha]$,
which has the vertex set $\alpha$ and the edge set $\{e: e \in E(G), e \subseteq \alpha\}$.
The hypergraph $G$  is {\it connected} if $G$ has only one connected component; otherwise it is called \emph{disconnected}.

The {\it adjacency tensor} $\A(G)$ of an $m$-uniform hypergraph $G$ on $n$ vertices is defined as $\mathcal{A}(G)=(a_{i_{1}i_{2}\ldots i_{m}})$ \cite{CD}, an $m$-th order $n$-dimensional tensor, where
\[a_{i_{1}i_{2}\ldots i_{m}}=\left\{
 \begin{array}{ll}
\frac{1}{(m-1)!}, &  \mbox{if~} \{v_{i_{1}},v_{i_{2}},\ldots,v_{i_{m}}\} \in E(G);\\
  0, & \mbox{otherwise}.
  \end{array}\right.
\]
  The eigenvalues, including the spectral radius of $G$ always refer to those of its adjacency tensor $\A(G)$.

Observe that the adjacency tensor $\A(G)$ is symmetric,
  and it is weakly irreducible if and only if the $G$ is connected \cite{PF,YY3}.
The connected components of $G$ on at least two vertices are exactly corresponding to the weakly irreducible components of $\A(G)$.
So we have the following results for hypergraphs immediately by Theorem \ref{main}, Corollary \ref{main2} and Theorem \ref{main3}.

\begin{thm}
Let $G$ be an $m$-uniform hypergraph containing at least one edge.
Let $\A(G)$ be the adjacency tensor of $G$ with spectral radius $\rho$.

\begin{enumerate}
\item
If $G$ is connected, then for all eigenvalues $\la$ of $G$ with modulus $\rho$, the projective eigenvarieties $\PV_{\la}(\A(G))$ are finite and have the same
number of elements, in particular there are finite many eigenvectors of $\A$ corresponding to $\la$ up to a scalar.

\item If $G$ is disconnected, then $\PV_{\rho}(\A(G))$ has dimension greater than zero if and only if
$G$ has at least two connected components on at least two vertices with spectral radius equal to $\rho$.
\end{enumerate}
\end{thm}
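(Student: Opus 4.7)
The plan is to reduce the statement for hypergraphs directly to the tensor-theoretic results already established, chiefly Theorem \ref{main}, Corollary \ref{main2}, and Theorem \ref{main3}. The essential preliminary observation is that the adjacency tensor $\A(G)$ of an $m$-uniform hypergraph $G$ is symmetric (hence a fortiori combinatorially symmetric) and nonnegative, and the directed graph $G(\A(G))$ coincides with the underlying (symmetrized) hypergraph $G$ in the sense that strong connectedness of $G(\A(G))$ is equivalent to connectedness of $G$ as a hypergraph. This equivalence, recorded in the paragraph preceding the theorem, will let me translate every hypothesis about $G$ into a hypothesis about $\A(G)$ of the form needed by the earlier results.

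For part (1), I would argue as follows. Connectedness of $G$ makes $\A(G)$ nonnegative weakly irreducible, so Theorem \ref{main} immediately gives that $\PV_{\rho}(\A(G))$ is finite. For an arbitrary eigenvalue $\la$ with $|\la|=\rho$, I would invoke Theorem \ref{PF2}(2) applied with $\B=\A(G)$: it produces a diagonal matrix $D$ witnessing that $\A(G)$ is diagonally similar (up to the scalar $e^{-\i\theta}$) to itself, hence that $\A(G)$ is spectral $\ell$-symmetric for the cyclic index $\ell$, and that $\la=\rho e^{\i 2\pi j/\ell}$ for some $j\in\{0,1,\ldots,\ell-1\}$. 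Corollary \ref{main2} then delivers $|\PV_{\la}(\A(G))|=|\PV_\rho(\A(G))|$, which is the desired conclusion.

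For part (2), I would use the direct-sum decomposition of a combinatorially symmetric nonnegative tensor described in Section 5: when $G$ is disconnected, the equivalence classes under communication in $G(\A(G))$ are exactly the vertex sets of the connected components of $G$, and since $\A(G)$ is symmetric the classes are closed under all coordinates of every nonzero entry, giving the block form in (\ref{tensorform2}). A connected component of $G$ having at least two vertices contributes a weakly irreducible principal subtensor $\A(G)[\alpha]$, whereas an isolated vertex contributes a zero block of dimension one; this identifies the weakly irreducible components of $\A(G)$ with the nontrivial connected components of $G$. Applying Theorem \ref{main3} to $\A(G)$ then yields the stated equivalence: $\PV_\rho(\A(G))$ has positive dimension if and only if at least two such weakly irreducible components of $\A(G)$ have spectral radius $\rho$, i.e.\ at least two connected components of $G$ on $\ge 2$ vertices have spectral radius $\rho$.

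The proof is therefore almost entirely bookkeeping, and no step requires new ideas; the only point that deserves a little care is verifying in part (1) that every eigenvalue of modulus $\rho$ is a root of unity times $\rho$ of the form appearing in Corollary \ref{main2}, which is the one place where the cyclic-index / Perron--Frobenius argument via Theorem \ref{PF2}(2) must be invoked rather than simply quoting Theorem \ref{main} verbatim. I do not foresee any genuine obstacle.
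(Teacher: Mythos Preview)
Your proposal is correct and follows essentially the same route as the paper, which simply states that the theorem follows immediately from Theorem \ref{main}, Corollary \ref{main2}, and Theorem \ref{main3} together with the observation that $\A(G)$ is symmetric, is weakly irreducible iff $G$ is connected, and has weakly irreducible components corresponding to the connected components of $G$ on at least two vertices. Your extra care in part (1)---using Theorem \ref{PF2}(2) to show that any eigenvalue of modulus $\rho$ must be of the form $\rho e^{\i 2\pi j/\ell}$ so that Corollary \ref{main2} applies---is a point the paper glosses over, and your justification is sound.
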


\end{document}